\newcommand{\round}[1]{\ensuremath{\left(#1\right)}}
\newcommand{\setdist}[1]{\ensuremath{\text{\textnormal{dist}}\round{#1}}}
\newcommand{\R}{\mathbb{R}}
\newcommand{\N}{\mathbb{N}}
\newcommand{\B}{Y}
\newcommand{\cW}{{\mathcal{W}}}
\newcommand{\cB}{{\mathcal{B}}}
\newcommand{\cC}{{\mathcal{C}}}
\newcommand{\cP}{{\mathcal{P}}}
\newcommand{\cL}{\mathcal{L}}
\newcommand{\cA}{\mathcal{A}}
\newcommand{\cU}{\mathcal{U}}
\newcommand{\cN}{\mathcal{N}}
\def\diam{\mathop{\rm diam}\nolimits}
\newtheorem{theorem}{Theorem}[section]
\newtheorem{proposition}{Proposition}
\newtheorem{remark}{Remark}
\theoremstyle{definition}
\newtheorem{definition}[theorem]{Definition}
\title[]{The numerical computation of unstable manifolds for infinite dimensional dynamical systems by embedding techniques}
\author[A. Ziessler, M. Dellnitz, R. Gerlach]{Adrian Ziessler, Michael Dellnitz, Raphael Gerlach}
\email{ziessler@math.uni-paderborn.de}
\email{dellnitz@uni-paderborn.de}
\email{rgerlach@math.uni-paderborn.de}
\date{}
\begin{document}

\maketitle

\centerline{\scshape Adrian Ziessler, Michael Dellnitz and Raphael Gerlach}
\medskip
{\footnotesize
\centerline{Department of Mathematics}
\centerline{Paderborn University}
\centerline{33095 Paderborn, Germany}
}

\begin{abstract}
In this work we extend the novel framework developed in \cite{DHZ16} to the computation of finite dimensional unstable manifolds of infinite dimensional dynamical systems. To this end, we adapt a set-oriented continuation technique developed in \cite{DH96} for the computation of such objects of finite dimensional systems with the results obtained in \cite{DHZ16}. We show how to implement this approach for the analysis of partial differential equations and illustrate its feasibility by computing unstable manifolds of the one-dimensional Kuramoto-Sivashinsky equation as well as for the Mackey-Glass delay differential equation.
\end{abstract}

\section{Introduction}
Set-oriented numerical methods have been developed in the context of the numerical treatment of dynamical systems (e.\ g.\ \cite{DH97, DJ99, FD03, FLS10}). The basic idea of these methods is to cover the objects of interest -- for instance \emph{invariant sets} or \emph{global attractors} -- by outer approximations which are created via multilevel subdivision or continuation techniques. They have been used in several different application areas such as molecular dynamics (\cite{SHD01}), astrodynamics (\cite{DJLMPPRT05}) or ocean dynamics (\cite{FHRSSG12}).

The computation of (global) invariant manifolds has attracted a lot of interest in recent years. Approaches based on geometric concepts can be used to approximate (up to two-dimensional) unstable manifolds of vector fields (see \cite{KOD+05} for an overview). The approximation by geodesic level sets, for instance, produces a regular mesh that consists of geodesic circles by solving appropriate boundary value problems (e.g., \cite{KO03,KO07}). Topological methods are used to create outer approximations of invariant manifolds. In \cite{KOD+05} a survey of different approaches for computing global stable or unstable manifolds of vector fields is given. 

In this paper, we will focus on the set-oriented continuation method that has been developed in \cite{DH96} and we will show how to use it for the computation of unstable manifolds of infinite dimensional dynamical systems. We will, in particular, approximate unstable manifolds for semiflows of Banach spaces (cf.~\cite{Ha71,He06,Ca12,CH12}). Our approach relies on the results obtained in \cite{DHZ16}, where the subdivision technique developed in \cite{DH97} has been extended to the infinite dimensional context. The underlying idea is to compute low dimensional invariant sets of infinite dimensional dynamical systems by utilizing embedding techniques for infinite dimensional systems \cite{HuntKaloshin99,R05}. To this end, a delay embedding technique has been used in order to obtain a one-to-one representation of the (infinite dimensional) dynamics by the so-called \emph{core dynamical system (CDS)}. The CDS is a continuous dynamical system on a state space of moderate dimension which we will refer to as the \emph{observation space}. Then the subdivision scheme from \cite{DH97} has been applied to the CDS in order to analyze one-to-one images of global attractors for the infinite dimensional dynamical system. The feasibility of this approach has been illustrated by computing invariant sets for delay differential equations with constant time delay. Recently, this approach has also been generalized to delay differential equations with state dependent time delay \cite{AZ18}. In \cite{BKK+18} low-dimensional transition manifolds of stochastic dynamical systems showing metastable behavior have been parameterized by using a combination of embedding and manifold learning techniques.

In addition to delay differential equations, other application scenarios in which finite dimensional invariant sets arise are certain types of dissipative dynamical systems described by partial differential equations, for instance the Kuramoto-Sivashinsky equation \cite{KT76, Siv77}, the Ginzburg-Landau equation, or scalar reaction-diffusion equations with cubic nonlinearity \cite{Jolly89}. For all these systems a finite dimensional so-called \emph{inertial manifold} exists to which trajectories are attracted exponentially fast, e.g., \cite{CFNT88,FJK88,Temam97}.

In this paper we extend the classical set-oriented continuation technique developed in \cite{DH96} to the infinite dimensional context. This method allows us to approximate unstable manifolds of infinite dimensional dynamical systems in observation space. The general numerical approach we are proposing is in principle applicable to infinite dimensional dynamical systems described by a Lipschitz continuous operator on a Banach space. However, in this article we will focus on \emph{partial differential equations (PDEs)}. To this end, we will develop an appropriate numerical realization of the CDS for PDEs and illustrate the efficiency of our method for a one-dimensional Kuramoto-Sivashinsky equation. The Kuramoto-Sivashinsky equation has attracted a lot of interest as a model for complex spatio-temporal dynamics and has been derived in the context of several extended physical problems, e.g., phase dynamics in reaction-diffusion systems \cite{KT76} or small thermal diffusive instabilities in laminar flame fronts \cite{Siv77}. It is also a paradigm for the existence of complex finite dimensional dynamics in a PDE. In addition, we will also illustrate the efficiency of our method for the Mackey-Glass delay differential equation by using the numerical realization of the CDS introduced in \cite{DHZ16}. In the context of delay differential equations a related -- though not set-oriented -- approach has recently been utilized in \cite{GJ17} for the numerical approximation of unstable manifolds. In this work the authors compute high order Taylor and Fourier-Taylor approximations of unstable manifolds for equilibria or periodic solutions.

A detailed outline of the paper is as follows. In \Cref{sec:review} we briefly summarize the results of \cite{DHZ16}. We state the main results of \cite{HuntKaloshin99} and \cite{R05} and we describe the construction of the CDS on the observation space. In \Cref{sec:continuation} we first briefly review the continuation method developed in \cite{DH96} for the computation of unstable manifolds of finite dimensional dynamical systems. Then we explain how to use this technique in the context of infinite dimensional dynamical systems. In \cite{DH96} convergence has been shown for compact subsets of unstable manifolds. Here we will extend this convergence result to the closure of the entire unstable manifold. In particular, we will prove that in this setting the embedding of the local unstable manifold (in infinite dimensional space) is identical to the local embedded unstable manifold (cf.~\cref{prop:elle}~(b)). 
A numerical realization for the construction of the CDS for PDEs is given in \Cref{sec:num_real_PDE}.
Finally, in \Cref{sec:numex}, we illustrate the efficiency of our method for the Mackey-Glass delay differential equation and for a one-dimensional Kuramoto-Sivashinsky equation. We will, in particular, generate numerical approximations of one-to-one copies of unstable manifolds for the Kuramoto-Sivashinsky equation for different parameter values.

\section{Review of subdivision and embedding results}\label{sec:review}
Since our set-oriented continuation method is based on the framework developed in \cite{DHZ16} we start with a short review of
the related material.
\subsection{Basic definitions and results from embedding theory}\label{ssec:embedding_theory}
We consider dynamical systems of the form
\begin{equation}\label{eq:DS}
u_{j+1} = \Phi (u_j),\quad j=0,1,\ldots,
\end{equation}
where $\Phi:\B\rightarrow \B$ is Lipschitz continuous on a Banach space $\B$.
Moreover we assume that $\Phi$ has an invariant compact set $\cA$, that is
\[
\Phi(\cA) = \cA.
\]
This assumption is justified by several classical results, where it has been shown that for many dissipative systems on Banach spaces there exist (non-trivial) compact attractors of finite capacity or Hausdorff dimension (see \cite{MP76,Ma81,CFT85,Ch88,Ha10}).
In order to approximate the set $\cA$ we combine a classical subdivision technique for the computation of such objects in a finite dimensional space with infinite dimensional embedding results (cf. \cite{HuntKaloshin99,R05}). 
For the statement of the main result of \cite{R05} we need three
particular notions:
\emph{prevalence} \cite{SYC91}, \emph{upper box counting dimension} and \emph{thickness
	exponent} \cite{HuntKaloshin99}.

\begin{definition}\quad
	\begin{enumerate}
		\item[(a)] A Borel subset $S$ of a normed linear space $V$ is
		\emph{prevalent} if there is a finite dimensional subspace $E$ of
		$V$ (the `probe space') such that for each $v \in V,\ v+e$ belongs
		to $S$ for (Lebesgue) almost every $e\in E$.
		\item[(b)] Let $\B$ be a Banach space, and let $\cA\subset \B$ be compact.
		For $\varepsilon >0$,
		denote by $N_\B(\cA,\varepsilon)$ the minimal number of balls of radius
		$\varepsilon$ (in the norm of $\B$) necessary to cover the set
		$\cA$. Then
		\begin{equation*}
		d(\cA; \B) = \limsup\limits_{\varepsilon \rightarrow 0} \frac{\log
			N_\B(\cA,\varepsilon)}{-\log \varepsilon}
		= \limsup_{\varepsilon\to 0}\; -\log_{{\varepsilon}} N_\B(\cA,\varepsilon)
		\end{equation*}
		denotes the upper box-counting dimension of $\cA$.
		\item[(c)]  Let $\B$ be a Banach space, and let $\cA\subset \B$ be compact.
		For $\varepsilon >0$,
		denote by $d_Y(\cA, \varepsilon)$ the minimal
		dimension of all finite dimensional subspaces $V\subset \B$ such
		that every point of $\cA$ lies within distance $\varepsilon$ of $V$;
		if no such $V$ exists, $d_Y(\cA, \varepsilon) = \infty$. Then
		\[
		\sigma(\cA, \B) = \limsup_{\varepsilon \to 0}
		\;-\log_{{\varepsilon}} d_Y(\cA, \varepsilon)
		\]
		is called the \emph{thickness exponent} of $\cA$ in $\B$.
	\end{enumerate}
	
\end{definition}

With these definitions the main results relevant for our framework are as follows:

\begin{theorem}[\cite{HuntKaloshin99}]
	Let $\B$ be a Banach space and $\cA\subset \B$ compact, with upper box counting dimension $d = d(\cA; \B)$ and thickness exponent $\sigma = \sigma(\cA, \B)$. Let $N>2d$ be an integer, and let $\alpha \in\R$ with
	\[ 0 < \alpha < \frac{ N-2d }{ N\cdot(1+\sigma) }. \]
	Then for a prevalent set of bounded linear maps $\cL:\B\to\R^N$ there is $C>0$ such that
	\[ C\cdot \| \cL(x-y)\|^\alpha \geq \|x-y\| \quad\text{for all }x,
	y\in \cA. \]
	\label{thm:HK99}
\end{theorem}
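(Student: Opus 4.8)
The plan is to work entirely within the prevalence framework recalled above (part (a) of the preceding definition): it suffices to exhibit a finite-dimensional probe space $E\subset\cL(\B,\R^N)$ (a subspace of the bounded linear maps) such that for each fixed $\cL_0\in\cL(\B,\R^N)$, Lebesgue-almost every $e\in E$ makes $\cL=\cL_0+e$ satisfy the asserted inequality. Writing $z=x-y$ and $c=C^{-1/\alpha}$, the claim $C\|\cL z\|^\alpha\ge\|z\|$ is equivalent to the H\"older lower bound $\|\cL z\|\ge c\,\|z\|^{1/\alpha}$ for all $z$ in the compact difference set $\cA-\cA$; note $\alpha\in(0,1)$ forces $1/\alpha>1$. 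I would therefore reduce everything to a quantitative measure estimate: for each scale, bound the measure of those $e\in E$ for which some difference $z$ of that size is sent too close to the origin.

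First I would cut $\cA-\cA$ into dyadic shells $S_k=\{z\in\cA-\cA:2^{-k-1}\le\|z\|<2^{-k}\}$. Covering $\cA$ by $N_\B(\cA,\varepsilon)$ balls of radius $\varepsilon=2^{-k}$ and subtracting centres covers $S_k$ by at most $N_\B(\cA,\varepsilon)^2$ balls, which by the definition of the upper box-counting dimension is $\varepsilon^{-2d-o(1)}$ as $\varepsilon\to0$; this is the origin of the $2d$ in the hypothesis. On $S_k$ a failure of $\|\cL z\|\ge c\,\|z\|^{1/\alpha}$ forces $\|\cL z\|\lesssim\varepsilon^{1/\alpha}$, so it is enough to control, for each representative $z$ with $\|z\|\approx\varepsilon$, the sublevel set $\{e\in E:\|(\cL_0+e)z\|\le\delta\}$ with $\delta\sim\varepsilon^{1/\alpha}$.

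The heart of the argument is this single-vector slab estimate. I would build $E$ from a finite-dimensional subspace $V\subset\B$ approximating $\cA$ (hence $\cA-\cA$) to a prescribed relative accuracy $\eta$; by the definition of the thickness exponent, accuracy $\eta$ costs $\dim V\sim\eta^{-\sigma}$, and this is exactly how $\sigma$ enters the count. Using functionals adapted to $V$ one arranges that the evaluation map $e\mapsto e(z)\in\R^N$ is uniformly nondegenerate on the normalized differences $z/\|z\|$, so that the sublevel set is a thin slab of Lebesgue measure $\lesssim(\delta/\|z\|)^{N}$ up to a factor coming from the approximation quality. The power $N$ reflects the dimension of the target $\R^N$, and the interplay of $\delta\sim\|z\|^{1/\alpha}$ with the thickness-driven approximation loss is what produces the combined factor $(1+\sigma)$.

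Finally I would sum. Multiplying the per-vector bound by the covering number $\varepsilon^{-2d-o(1)}$ and summing the resulting dyadic series over $k$, the total bad measure is finite precisely when $\alpha N(1+\sigma)<N-2d$, i.e.\ under the stated hypothesis $0<\alpha<(N-2d)/\bigl(N(1+\sigma)\bigr)$. A Borel--Cantelli argument then gives, for almost every $e\in E$, the bound $\|(\cL_0+e)z\|\ge c\,\|z\|^{1/\alpha}$ for all $z$ in all but finitely many shells; the remaining coarse shells $\{z\in\cA-\cA:\|z\|\ge 2^{-K}\}$ form a compact set on which, once $\cL$ is injective, the continuous function $\|\cL z\|/\|z\|^{1/\alpha}$ attains a positive minimum, so shrinking $c$ (enlarging $C$) closes the estimate for every $z\in\cA-\cA$. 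I expect the principal obstacle to be the slab estimate itself: constructing a single fixed finite-dimensional probe space whose evaluation map is \emph{simultaneously} nondegenerate for all normalized differences $z$, with its dimension quantitatively governed by the thickness exponent, and getting the $\sigma$-dependence sharp enough to reach the full range of $\alpha$.
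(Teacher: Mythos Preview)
The paper does not prove this theorem at all: it is stated with the attribution \cite{HuntKaloshin99} and used as a black box, with no argument supplied. So there is no ``paper's own proof'' to compare against; your proposal is an attempt to reconstruct the original Hunt--Kaloshin argument, which lies outside the present paper.

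That said, your outline captures the correct architecture of the Hunt--Kaloshin proof: dyadic shelling of $\cA-\cA$, the covering-number count producing the factor $\varepsilon^{-2d}$, a slab/sublevel estimate in the probe parameters giving a factor $\sim(\delta/\|z\|)^N$, and a Borel--Cantelli summation that converges exactly under $\alpha<(N-2d)/(N(1+\sigma))$. You have also correctly identified the genuine difficulty. Prevalence requires a \emph{single} finite-dimensional probe space $E$, fixed in advance and independent of $\cL_0$, yet the thickness exponent only furnishes approximating subspaces $V_\eta$ whose dimensions grow as $\eta\to 0$. In the actual proof this is handled not by choosing one $V$ but by taking $E$ to be built from a whole sequence of approximating subspaces at geometrically decreasing scales, with the measure on $E$ a product of suitably scaled Lebesgue measures; the slab estimate at scale $\varepsilon$ then uses only the block of $E$ adapted to that scale. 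Your sketch, which speaks of ``a finite-dimensional subspace $V$ approximating $\cA$ to a prescribed relative accuracy $\eta$'', does not yet make this multi-scale construction explicit, and without it the argument does not close: a single $V$ of fixed dimension cannot give the required nondegeneracy uniformly over all shells. This is precisely the obstacle you flag in your last sentence, and it is the step that carries the real content.
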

Note that this result implies that -- if $N$ is large enough --  a prevalent set of bounded linear maps ${\cL:\B\to\R^N}$ will be one-to-one on $\cA$.
This theorem lays the foundation for Robinson's main result concerning delay embedding techniques:

\begin{theorem}[\cite{R05}]\label{thm:R05}
	Let $Y$ be a Banach space and $\cA \subset Y$ a compact, invariant set,
	with upper box counting dimension $d$, and thickness exponent $\sigma$. Choose an integer $k> 2(1+\sigma)d$ and suppose further that the set $\cA_p$
	of $p$-periodic points of $\Phi$ satisfies $d(\cA_p;Y) < p/(2+2\sigma)$ for $p=1,\ldots,k$.
	Then for a prevalent set of Lipschitz maps $f:\B\rightarrow \R$ the observation map
	$D_k[f,\Phi] : \B \rightarrow \R^k$ defined by
	\begin{equation}\label{def:D_k}
	D_k[f,\Phi] (u) = \left(f(u),f(\Phi(u)),\ldots,f(\Phi^{k-1}(u))\right)^T
	\end{equation}
	is one-to-one on $\cA$.
\end{theorem}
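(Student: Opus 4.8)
The plan is to establish injectivity of $D_k[f,\Phi]$ for a prevalent set of Lipschitz $f$ by a perturbation argument in the spirit of Sauer--Yorke--Casdagli, using \Cref{thm:HK99} to first collapse the infinite-dimensional problem onto a finite-dimensional one. Observe that $D_k[f,\Phi]$ fails to be injective at a pair $x\neq y$ in $\cA$ exactly when $f(\Phi^j x)=f(\Phi^j y)$ for all $j=0,\ldots,k-1$. So I want to exhibit a finite-dimensional ``probe space'' $\mathcal{E}$ of Lipschitz functions such that, for every fixed Lipschitz $f$ and for Lebesgue-almost-every $e\in\mathcal{E}$, the perturbed map $D_k[f+e,\Phi]$ separates all such pairs; by definition this yields prevalence.

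First I would fix, via \Cref{thm:HK99}, a bounded linear map $\cL:\B\to\R^N$ (with $N>2d$ large) that is injective---indeed H\"older bi-Lipschitz---on $\cA$. This reduces matters to the compact set $K=\cL(\cA)\subset\R^N$ carrying the induced (merely H\"older) dynamics, and lets me take as probe space $\mathcal{E}=\{\,p\circ\cL : p\in\mathcal{P}\,\}$, where $\mathcal{P}$ is a suitable finite-dimensional space of polynomials on $\R^N$. The point of the polynomials is quantitative: they separate finitely many distinct points of $\R^N$ with a lower bound on the size of the separating coefficients, which is exactly what the measure estimate below requires.

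Next, for a single pair $(x,y)$ whose orbit points $\Phi^j x,\Phi^j y$ ($0\le j\le k-1$) are all distinct, the condition $D_k[f+e,\Phi](x)=D_k[f+e,\Phi](y)$ forces $e$ into an affine subspace of $\mathcal{E}$ of codimension $k$, so the exceptional parameter set for that pair is Lebesgue-null. The real work is to integrate this over the uncountable family of bad pairs. Following Hunt--Kaloshin, I would cover $\cA$ at scale $\varepsilon$ by $N_\B(\cA,\varepsilon)$ balls, estimate the measure of the parameters $e$ for which some pair in a given pair of boxes remains unseparated, sum over boxes, and let $\varepsilon\to 0$. The box-counting dimension enters here: the number of pairs of boxes grows like $N_\B(\cA,\varepsilon)^2\approx\varepsilon^{-2d}$, while each pair contributes a parameter-measure of order $\varepsilon^{k}$ (one power per delay coordinate), so the total vanishes precisely when $k>2d$; the thickness exponent $\sigma$ enters through the H\"older loss in $\cL^{-1}$, degrading the usable exponent and thereby forcing the stronger requirement $k>2(1+\sigma)d$.

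The main obstacle, and the reason for the hypothesis on periodic points, is that this codimension-$k$ count breaks down when orbit points coincide, i.e.\ near the $p$-periodic points of $\Phi$. If $\Phi^p x=x$ then the coordinates of $D_k$ repeat with period $p$, so a generic perturbation imposes only about $p$ independent conditions rather than $k$, and the naive estimate on $\cA_p$ is insufficient. To close this I would treat the sets $\cA_p$ separately for each $p=1,\ldots,k$: there the effective codimension is of order $p$, the relevant box count is governed by $d(\cA_p;\B)$, and the same summation converges exactly when $d(\cA_p;\B)<p/(2+2\sigma)$, which is precisely the stated assumption. Combining the generic (aperiodic) estimate with these finitely many periodic estimates shows that the exceptional set in $\mathcal{E}$ has measure zero, establishing prevalence of the injective $f$.
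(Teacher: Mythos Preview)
The paper does not prove this theorem at all: it is quoted verbatim as a result of Robinson \cite{R05} and immediately used, with no proof or sketch given. So there is no ``paper's own proof'' to compare your proposal against.

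That said, your outline is broadly faithful to the strategy in Robinson's original paper (which in turn adapts Sauer--Yorke--Casdagli and Hunt--Kaloshin): reduce to finite dimensions via a linear embedding as in \Cref{thm:HK99}, take a polynomial probe space pulled back through that embedding, run a covering-and-measure argument where the box-counting dimension controls the number of $\varepsilon$-pairs and the thickness exponent enters through the H\"older loss in the inverse, and handle low-period orbits separately because coordinate repetitions drop the effective codimension. One caution: your heuristic ``each pair contributes parameter-measure of order $\varepsilon^{k}$'' is too clean as stated---the actual estimate has to track how the polynomial separation bound degrades under the H\"older inverse of $\cL$, and this is precisely where the factor $(1+\sigma)$ in $k>2(1+\sigma)d$ comes from, not merely from ``H\"older loss'' in a vague sense. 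Similarly, in the periodic case the codimension is not ``about $p$'' but must be computed carefully on pairs near $\cA_p$, and the exponent bookkeeping there is what forces $d(\cA_p;Y)<p/(2+2\sigma)$. As a plan the proposal is sound; as a proof it would still need those quantitative steps filled in, and for that you should consult \cite{R05} directly rather than the present paper.
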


\begin{remark}\label{rmk:ext_R05}\quad
	\begin{enumerate}
		\item[(a)] This result can be generalized to the case where several different observables are evaluated. More precisely, for a prevalent set of Lipschitz maps ${f_i:\B\to\R}$, $i=1,\dots,q\leq k$, the observation map ${D_k[f_1, \ldots, f_q, \Phi]:\B\to\R^k}$,
		\begin{equation*}
		u \mapsto \left(%
		f_1(u), \ldots, f_1\left(\Phi^{k_1-1}(u)\right),%
		\ldots, %
		f_q(u), \ldots, f_q\left(\Phi^{k_q-1}(u)\right)\right)^T
		\label{def:D_kvar}
		\end{equation*}
		is also one-to-one on $\cA$, provided that
		\[ 
		k =\sum_{i=1}^q k_i > 2(1+\sigma)\cdot d\  \mbox{ and }\  {d(\cA_p) < p/(2+2\sigma)} \quad \forall p\leq \max(k_1,\ldots, k_q).
		\]
		\item[(b)] As observed in \cite{HuntKaloshin99}, the thickness exponent is always bounded by the (upper) box-counting dimension, i.e.,
		\begin{equation*}
		\sigma(\cA, Y) \leq d(\cA; Y).
		\end{equation*}
		Thus, providing that we know the upper box-counting dimension $d$ of $\cA$ we can in principle always choose a \emph{worst-case} embedding dimension $k$ such that
		\begin{equation*}
		k > 2(1+d)d
		\end{equation*}
		is satisfied (cf. \cref{thm:R05}).
	\end{enumerate}
	
\end{remark}

\subsection{The core dynamical system (CDS)}
\label{sec:comp_emb_att}
In \cite{DHZ16} the results from \Cref{ssec:embedding_theory} have been used to create a finite dimensional dynamical system that allows to approximate invariant sets for infinite dimensional dynamical systems on Banach spaces. In this section we will briefly review the construction of the CDS.

Let $\cA$ be a compact invariant set of the infinite dimensional dynamical system \eqref{eq:DS}. We denote by $A_k$ the image of $\cA\subset \B$ under the \emph{observation map} $R:Y \rightarrow \R^k$, that is
\begin{align}\label{eq:A_k}
A_k = R(\cA),
\end{align}
where $R = \cL$ according to \cref{thm:HK99} or $R = D_k[f,\Phi]$ according to \cref{thm:R05}.
We then construct the CDS
\begin{equation}\label{eq:CDS}
x_{j+1} = \varphi(x_j), \quad j=0,1,2,\ldots,
\end{equation}
with $\varphi : \R^k \rightarrow \R^k$ as follows: At first we define $\varphi$ on the set $A_k$ by
\begin{equation*}
\label{def:phi}
\varphi = R \circ \Phi \circ \widetilde E,
\end{equation*}
where $\widetilde E:A_k \rightarrow \B$ is the continuous map satisfying
\begin{equation}
\label{eq:condRE}
(\widetilde E\circ R)(u) = u\quad \forall u\in \cA
\quad\text{ and }\quad
(R\circ \widetilde E)(x) = x\quad \forall x\in A_k.
\end{equation}
In fact, this is possible since $R$ is invertible as a mapping from $\cA$ to $A_k$. Observe that by this construction $A_k$ is an invariant set for $\varphi$. By using a generalization of Tietze's extension theorem by Dugundji \cite{Dugundji51} we extend $\widetilde{E}$ to a continuous map $E:\R^k\to \B$ with $E_{\vert A_k} = \widetilde E$ (see \cref{fig:kommdia}).
\begin{figure}[H]
	\centering
	\includegraphics[width=0.56\textwidth]{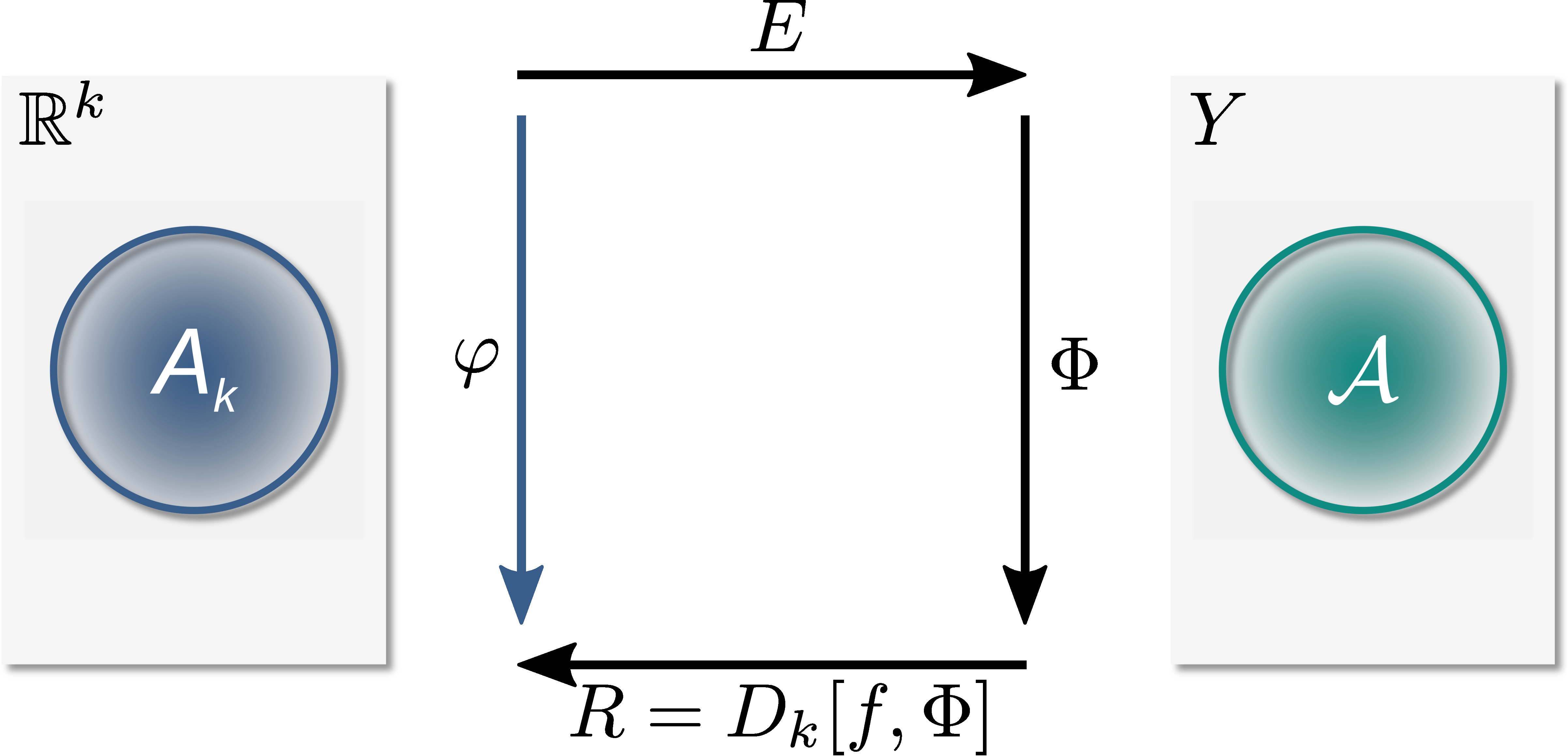}
	\caption{Definition of the CDS $\varphi$.}
	\label{fig:kommdia}
\end{figure}

Now we are in the position to extend the CDS $\varphi$ to $\R^k$:

\begin{proposition}
	\label{prop:phi_is_cont}
	There is a continuous map $\varphi:\R^k \to \R^k$ satisfying
	\begin{equation*}
	\varphi(R(u)) = R(\Phi(u)) \text{ for all } u\in \cA.
	\label{eq:conjugonA}
	\end{equation*}
\end{proposition}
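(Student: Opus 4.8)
The plan is to set $\varphi := R \circ \Phi \circ E$ on all of $\R^k$, where $E:\R^k\to\B$ is the continuous Dugundji extension of $\widetilde E$ constructed above. This is the natural extension of the formula $\varphi = R\circ\Phi\circ\widetilde E$ used to define the CDS on $A_k$. First I would check that $\varphi$ is continuous. The map $E$ is continuous by construction, $\Phi$ is continuous because it is Lipschitz, and $R$ is continuous: in the setting of \cref{thm:HK99} it is the bounded linear map $\cL$, while in the setting of \cref{thm:R05} it is the delay observation map $D_k[f,\Phi]$, whose coordinates $u\mapsto f(\Phi^{i-1}(u))$ are continuous since $f$ is Lipschitz and $\Phi$ is continuous. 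As a composition $\R^k \to \B \to \B \to \R^k$ of continuous maps, $\varphi$ is therefore continuous on all of $\R^k$.

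Next I would verify the conjugacy identity on $A_k$. Fix $u\in\cA$ and put $x := R(u)\in A_k$. Since $E$ restricts to $\widetilde E$ on $A_k$, we have $E(x) = \widetilde E(R(u))$, and the first relation in \eqref{eq:condRE} yields $\widetilde E(R(u)) = u$. Consequently
\[
\varphi(R(u)) \;=\; R\bigl(\Phi(E(R(u)))\bigr) \;=\; R(\Phi(u)),
\]
which is exactly the claimed equality. Note also that on $A_k$ this $\varphi$ coincides with the original definition $R\circ\Phi\circ\widetilde E$, so $A_k$ is preserved as an invariant set for the extended map.

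I do not expect a genuine obstacle: once the Dugundji extension $E$ is in hand, the statement reduces to assembling continuous pieces and reading off \eqref{eq:condRE}. The only place demanding a little care is the continuity of $R$ in the delay-embedding case, where one should explicitly note that each observable $f\circ\Phi^{i-1}$ inherits continuity from the Lipschitz function $f$ and from $\Phi$; the linear case is immediate. No new analytic input beyond \cref{thm:HK99}, \cref{thm:R05}, and the extension theorem of Dugundji is required.
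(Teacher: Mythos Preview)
Your proposal is correct and follows exactly the construction the paper outlines immediately before the proposition: define $\varphi = R\circ\Phi\circ E$ using the Dugundji extension $E$ of $\widetilde E$, observe that the composition is continuous, and read off the conjugacy from \eqref{eq:condRE}. The paper itself does not reproduce the proof but refers to \cite{DHZ16}, where the argument is precisely the one you give.
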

For the proof the reader is referred to \cite{DHZ16}.

\subsection{Computation of embedded attractors via subdivision}
\label{sec:subdiv}
By our construction, the dynamics of the CDS $\varphi$ on $A_k$ is topologically conjugate to that of $\Phi$ on $\cA$. In what follows we will briefly review the main statements of \cite{DHZ16}.
We will use~\eqref{eq:CDS} in order to approximate the embedded invariant set $A_k$ (cf. \eqref{eq:A_k}) via subdivision. To this end, we give a brief review of the related subdivision scheme.

\vspace{1em}
\textbf{Subdivision scheme:}
\label{ssec:subdiv}
Let $Q \subset \R^k$ be a compact set. We define the \emph{global attractor relative to} $Q$ by
\begin{equation}
\label{eq:relativeAttractor}
A_Q = \bigcap_{j\ge 0} \varphi^j(Q).
\end{equation}
The aim is to approximate this set with the subdivision procedure.
By using the subdivision algorithm we obtain a sequence $\cB_0,\cB_1,\ldots$ of finite
collections of compact subsets of $\R^k$ such that the diameter
\[
\diam(\cB_\ell) = \max_{B\in\cB_\ell}\diam(B)
\]
converges to zero for $\ell\rightarrow\infty$.  Given an initial
collection $\cB_0$, we inductively obtain $\cB_\ell$ from $\cB_{\ell-1}$
for $\ell=1,2,\ldots$ in two steps.

\begin{enumerate}
	\item {\em Subdivision:} Construct a new collection
	$\hat\cB_\ell$ such that
	\begin{equation}
	\label{eq:sd1}
	\bigcup_{B\in\hat\cB_\ell}B = \bigcup_{B\in\cB_{\ell-1}}B
	\end{equation}
	and
	\begin{equation}
	\label{eq:sd2}
	\diam(\hat\cB_\ell) = \theta_\ell\diam(\cB_{\ell-1}),
	\end{equation}
	where $0<\theta_{\min} \le \theta_\ell\le \theta_{\max} < 1$.
	\item {\em Selection:} Define the new collection $\cB_\ell$ by
	\begin{equation}
	\label{eq:select}
	\cB_\ell=\left\{B\in\hat\cB_\ell : \exists \hat B\in\hat\cB_\ell
	~\mbox{such that}~\varphi^{-1}(B)\cap\hat B\ne\emptyset\right\}.
	\end{equation}
\end{enumerate}
The first step guarantees that the collections $\cB_\ell$ contains
successively finer sets for increasing $\ell$.  In fact, by construction
\begin{equation}\label{eq:diamB}
\diam(\cB_\ell)\leq\theta_{\max}^\ell\diam(\cB_0)\rightarrow 0\quad
\mbox{for $\ell\rightarrow\infty$.}
\end{equation}

In the second step we remove each subset whose preimage does neither
intersect itself nor any other subset in $\hat\cB_\ell$. 

Denote by $Q_\ell$ the collection of compact subsets obtained after $\ell$
subdivision steps, that is 
\[
Q_\ell=\bigcup_{B\in\cB_\ell}B.
\]
Observe that the $Q_\ell$'s define a nested sequence of compact sets, that is, $Q_{\ell+1}\subset Q_\ell$.
Therefore, for each $m$,
\begin{equation}
Q_m = \bigcap\limits_{\ell =1}^{m} Q_\ell,
\end{equation}
and we may view
\begin{equation}
Q_{\infty} = \bigcap\limits_{\ell =1}^{\infty} Q_\ell
\end{equation}
as the limit of the $Q_\ell$'s.
Then the selection step is responsible for the fact that the unions
$Q_\ell$ approach the relative global attractor:
\begin{proposition}\label{prop:AQ_Qinfty}
	Suppose that $A_Q$ satisfies $\varphi^{-1}(A_Q) \subset A_Q$. Then
	\[
	A_Q = Q_\infty.
	\]
\end{proposition}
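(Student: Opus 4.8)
The plan is to prove the two inclusions $A_Q\subseteq Q_\infty$ and $Q_\infty\subseteq A_Q$ separately; only the first will require the hypothesis $\varphi^{-1}(A_Q)\subseteq A_Q$. The common starting point is a reformulation of the selection step \eqref{eq:select}. Since $\bigcup_{B\in\hat\cB_\ell}B=Q_{\ell-1}$ by \eqref{eq:sd1}, the condition ``$\exists\,\hat B\in\hat\cB_\ell$ with $\varphi^{-1}(B)\cap\hat B\neq\emptyset$'' is equivalent to $\varphi(\hat B)\cap B\neq\emptyset$ for some $\hat B$, i.e.\ to $\varphi(Q_{\ell-1})\cap B\neq\emptyset$. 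Thus $Q_\ell$ is exactly the union of those boxes of $\hat\cB_\ell$ that meet $\varphi(Q_{\ell-1})$. I would record at the outset that the $Q_\ell$ form a nested sequence of compact sets and that $\diam(\hat\cB_\ell)\to 0$, which follows from \eqref{eq:sd2} and \eqref{eq:diamB}.

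For $A_Q\subseteq Q_\infty$ I would show $A_Q\subseteq Q_\ell$ for every $\ell$ by induction, the base case $A_Q\subseteq Q_0=Q$ being immediate from \eqref{eq:relativeAttractor}. The key preliminary claim is the backward-reachability statement $A_Q\subseteq\varphi(A_Q)$, and this is precisely where the hypothesis enters: any $x\in A_Q$ belongs in particular to $\varphi(Q)$ (the term $j=1$ in \eqref{eq:relativeAttractor}), hence has a preimage $q\in Q$ with $\varphi(q)=x$; but then $q\in\varphi^{-1}(A_Q)\subseteq A_Q$, so $x$ has a preimage \emph{inside} $A_Q$. Granting $A_Q\subseteq Q_{\ell-1}$, an arbitrary $x\in A_Q$ lies in some box $B\in\hat\cB_\ell$ (as $x\in Q_{\ell-1}$), while its $A_Q$-preimage $q\in A_Q\subseteq Q_{\ell-1}$ yields $x=\varphi(q)\in B\cap\varphi(Q_{\ell-1})$; hence $B$ is selected and $x\in Q_\ell$, closing the induction and giving $A_Q\subseteq\bigcap_\ell Q_\ell=Q_\infty$.

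For the reverse inclusion $Q_\infty\subseteq A_Q$ no hypothesis is needed. Given $x\in Q_\infty$, at each level $\ell$ the selected box containing $x$ meets $\varphi(Q_{\ell-1})$, so I obtain $w_\ell\in Q_{\ell-1}\subseteq Q$ with $|\varphi(w_\ell)-x|\le\diam(\hat\cB_\ell)\to 0$. By compactness of $Q$ a subsequence of $(w_\ell)$ converges to some $w$, the nestedness and closedness of the $Q_\ell$ force $w\in Q_\infty$, and continuity of $\varphi$ gives $\varphi(w)=x$. Thus every point of $Q_\infty$ has a preimage in $Q_\infty$; iterating produces a full backward orbit contained in $Q_\infty\subseteq Q$, whence $x\in\varphi^j(Q)$ for all $j\ge 0$ and therefore $x\in A_Q$ by \eqref{eq:relativeAttractor}.

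The main obstacle is the inclusion $A_Q\subseteq Q_\infty$, and more precisely the claim $A_Q\subseteq\varphi(A_Q)$: the relative attractor \eqref{eq:relativeAttractor} need not be invariant under $\varphi$, so a point of $A_Q$ has no a priori reason to possess a preimage within $A_Q$, and a $Q$-preimage lying outside $A_Q$ could be discarded by the selection step at some finite level, after which $x$ would drop out of all later $Q_\ell$. The hypothesis $\varphi^{-1}(A_Q)\subseteq A_Q$ is exactly the condition that excludes this, by forcing \emph{every} preimage of an $A_Q$-point back into $A_Q$. The remaining work — the subsequence extraction and the diameter bookkeeping in the reverse inclusion — is routine once \eqref{eq:diamB} is in hand.
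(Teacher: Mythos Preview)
Your proof is correct. The paper itself does not supply a proof of this proposition: it is stated as a review result taken over from \cite{DHZ16}, accompanied only by the remark that the extra hypothesis $\varphi^{-1}(A_Q)\subset A_Q$ is needed because the CDS $\varphi$ is merely continuous rather than a homeomorphism (in contrast to the original setting of \cite{DH97}). Your argument pinpoints exactly where this hypothesis enters --- namely in upgrading ``$x\in A_Q$ has a $Q$-preimage'' (automatic from the $j=1$ term in \eqref{eq:relativeAttractor}) to ``$x\in A_Q$ has an $A_Q$-preimage'', which is what the induction $A_Q\subseteq Q_\ell$ requires --- and this matches the paper's one-line explanation. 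The reverse inclusion via a compactness/subsequence argument producing a full backward orbit inside $Q_\infty$ is the standard route and is sound as written.
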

Observe that in contrast to \cite{DH97} we have to assume that $\varphi^{-1}(A_Q) \subset A_Q$ since the CDS is only continuous and not homeomorphic. Moreover, we note that we can, in general, not expect that $A_k = A_Q$. In fact, by construction $A_Q$ may contain several invariant sets and related heteroclinic connections. However, if $\cA$ is an attracting set we can prove equality:
\begin{proposition}\label{prop:convergence_embedded_invariant_sets}\quad
	\begin{enumerate}
		\item[(a)]  Let $A_Q$ be the global attractor relative to the compact set $Q$,
		and suppose that the embedded attractor $A_k$ satisfies $A_k \subset Q$.
		Then
		\begin{equation}
		A_k \subset A_Q.
		\end{equation}
		\item[(b)] Suppose that $\cA$ is an attracting set with basin of attraction $\cU \supset \cA$ and choose $Q \subset \R^k$ such that $A_k \subset Q$ and $E(Q) \subset \cU$. Define for $m \geq 1$ the continuous maps
		\begin{equation*}
		\varphi_m = R\circ \Phi^m \circ E
		\end{equation*}
		and denote the corresponding relative global attractors by $A_Q^m$, where
		\begin{equation*}
		A_Q^m = \bigcap_{j \geq 0} \varphi_m^j(Q).
		\end{equation*}
		Then
		\begin{equation*}
		A_k = A_Q^\infty,
		\end{equation*}
		where $A_Q^\infty = \bigcap_{m\geq 1} A_Q^m$. 
	\end{enumerate}
\end{proposition}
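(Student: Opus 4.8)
The plan is to lean throughout on the $\varphi$-invariance of $A_k$, which reduces both parts to short set-theoretic manipulations, and to isolate the single analytic ingredient---uniform attraction of $\cA$ on compact subsets of its basin---that is needed only for the nontrivial inclusion in~(b). For part~(a) I would simply use that, by construction, $\varphi(A_k)=A_k$, so that $\varphi^j(A_k)=A_k$ for every $j\geq 0$. Since $A_k\subset Q$ this gives $A_k=\varphi^j(A_k)\subset\varphi^j(Q)$ for all $j$, and intersecting over $j$ yields $A_k\subset\bigcap_{j\geq 0}\varphi^j(Q)=A_Q$.

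For part~(b) the inclusion $A_k\subset A_Q^\infty$ runs along the same lines, once one checks that $A_k$ is invariant under each $\varphi_m=R\circ\Phi^m\circ E$. This follows directly from~\eqref{eq:condRE}: for $x\in A_k$ we have $E(x)=\widetilde E(x)\in\cA$, hence $\Phi^m(E(x))\in\cA$ by invariance of $\cA$, and $\varphi_m(x)=R(\Phi^m(\widetilde E(x)))\in R(\cA)=A_k$; the reverse inclusion $A_k\subset\varphi_m(A_k)$ follows from $\Phi^m(\cA)=\cA$ together with the two identities in~\eqref{eq:condRE}. Thus $\varphi_m^j(A_k)=A_k\subset\varphi_m^j(Q)$ for all $j$, so $A_k\subset A_Q^m$ for every $m$, and therefore $A_k\subset\bigcap_{m\geq1}A_Q^m=A_Q^\infty$.

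The heart of the argument is the reverse inclusion $A_Q^\infty\subset A_k$. Given $y\in A_Q^\infty$ we have $y\in A_Q^m$ for every $m$, and taking the $j=1$ term of the defining intersection already gives $y\in\varphi_m(Q)=R(\Phi^m(E(Q)))$ for all $m$. Hence for each $m$ there is $z_m\in E(Q)$ with $y=R(\Phi^m(z_m))$. Now $E(Q)$ is a compact subset of the basin $\cU$ (it is the continuous image of the compact set $Q$, and $E(Q)\subset\cU$ by hypothesis), so uniform attraction of $\cA$ gives $\sup_{z\in E(Q)}\mathrm{dist}(\Phi^m(z),\cA)\to0$, and in particular $\mathrm{dist}(\Phi^m(z_m),\cA)\to0$. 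Choosing $a_m\in\cA$ with $\norm{\Phi^m(z_m)-a_m}\to0$ and passing, by compactness of $\cA$, to a subsequence $a_{m_i}\to a\in\cA$, we obtain $\Phi^{m_i}(z_{m_i})\to a$; continuity of $R$ together with $R(\Phi^m(z_m))=y$ for all $m$ then forces $y=R(a)\in R(\cA)=A_k$, as desired.

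The main obstacle is precisely this last step: one must pass from ``$y\in\varphi_m(Q)$ for all $m$'' to ``$y\in A_k$'', which requires that the iterates $\Phi^m$ drive the \emph{fixed} compact set $E(Q)$ into every neighborhood of $\cA$ uniformly, rather than merely pointwise. I would therefore make sure that the notion of attracting set in force here carries the standard consequence that a compact attracting set attracts compact subsets of its basin uniformly; this is exactly what legitimizes replacing the varying points $z_m$ by the uniform estimate over $E(Q)$. Everything else is the invariance bookkeeping already used in part~(a).
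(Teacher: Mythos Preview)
The paper does not supply a proof of this proposition: it appears in Section~2 as part of the review of results from \cite{DHZ16}, and no argument is given in the present text. There is therefore no ``paper's own proof'' to compare against here.

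That said, your argument is correct. Part~(a) is exactly the invariance computation the paper itself invokes later (as \cite[Lemma~4.1]{DHZ16}) in the proof of \cref{prop:elle}~(a). For part~(b), your inclusion $A_k\subset A_Q^\infty$ via $\varphi_m$-invariance of $A_k$ is clean, and the reverse inclusion is handled correctly: you rightly observe that only the single term $y\in\varphi_m(Q)$ is needed from $y\in A_Q^m$, and then the uniform attraction of the compact set $E(Q)\subset\cU$ toward $\cA$, together with compactness of $\cA$ and continuity of $R$, closes the argument. Your explicit flagging of the one genuine analytic input---that an attracting set attracts compact subsets of its basin uniformly---is appropriate; this is the standard meaning of ``attracting set'' in the references the paper builds on (e.g., Hale, Temam), so the step is legitimate.
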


\begin{remark}
	Roughly speaking \cref{prop:convergence_embedded_invariant_sets}~(b) states that it is possible to approximate an attracting set for $\Phi$ if we perform the computations with appropriately high iterates of $\Phi$.
\end{remark}

\section{A subdivision and continuation technique for embedded unstable manifolds}
\label{sec:continuation}
In this section we extend the results of \cite{DH96} for the computation of finite dimensional unstable manifolds of infinite dimensional dynamical systems of the form \eqref{eq:DS}. We will in particular focus on
invariant manifolds for semiflows on Banach spaces (cf. \cite{Ha71,He06,Ca12,CH12}).

\subsection{Approximation of the local unstable manifold}

Let us denote by
\begin{equation}\label{eq:unstable_manifold}
\cW_{\Phi}^u(u^*) \subset \cA
\end{equation}
the unstable manifold of $u^* \in \cA$, where $u^*$ is a steady state solution of the infinite dimensional dynamical system $\Phi$ (cf. \eqref{eq:DS}). Furthermore, let us define the \emph{embedded unstable manifold} $W^u(p)$ by
\begin{equation}\label{eq:embedded_unstable_manifold}
W^u(p) = R(\cW_{\Phi}^u(u^*)) \subset A_k,
\end{equation}
where $p = R(u^*)$ and $R$ is the observation map introduced in \Cref{sec:review}. Observe that, by construction, $W^u(p)$ is an invariant set for $\varphi$ (cf. \eqref{eq:condRE}). Our goal is to approximate compact subsets of $W^u(p)$ or even the entire closure $\overline{W^u(p)}$ via an adaptation of a set-oriented continuation method introduced in \cite{DH96}. 

Let us denote by $\cW_{\Phi,loc}^u(u^*) \subset \cA$ the local unstable manifold of the steady state $u^*$ and choose a compact neighborhood ${C\subset A_k}$ such that
\begin{equation*}
W_{loc}^u(p) = R(\overline{\cW_{\Phi,loc}^u(u^*)}) \subset C.
\end{equation*}
Since $R$ is a homeomorphism on $\cA$ we can conclude that $W_{loc}^u(p)$ is compact. Then the following proposition holds: 
\begin{proposition}\label{prop:elle}\quad
	\begin{enumerate}
		\item[(a)] Let $A_C$ be the global attractor relative to $C$. Then
		\begin{equation*}
		W_{loc}^u(p) \subset A_C.
		\end{equation*}
		\item[(b)] Let us suppose that $\overline{\cW_{\Phi,loc}^u(u^*)}$ is a compact attracting set with basin of attraction ${\cU \supset \overline{\cW_{\Phi,loc}^u(u^*)}}$. Choose $C \subset \R^k$ such that $W_{loc}^u(p) \subset C \subset A_k$ and $E(C) \subset \cU$. Then
		\begin{equation*}
		W_{loc}^u(p) = A_C.
		\end{equation*}
	\end{enumerate}
\end{proposition}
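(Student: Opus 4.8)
The plan is to prove the two inclusions separately, after recording one preliminary observation used chiefly in part (b): on the invariant set $A_k$ the CDS is genuinely conjugate to $\Phi$, in the sense that $\varphi^j|_{A_k} = R\circ\Phi^j\circ\wt E$ for every $j\ge 0$. This identity follows by induction from $\wt E\circ R = \mathrm{id}_{\cA}$ (cf.~\eqref{eq:condRE}) together with the single-step relation of \cref{prop:phi_is_cont} and the invariance $\Phi(\cA)=\cA$; along the way it also shows $\varphi(A_k)\subset A_k$, whence $\varphi^j(C)\subset A_k$ for $j\ge 1$ and therefore $A_C\subset A_k$.

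For part (a) the key point is that $W_{loc}^u(p)$ is \emph{overflowing invariant} under $\varphi$, i.e.\ $W_{loc}^u(p)\subset\varphi(W_{loc}^u(p))$. This is inherited from the local unstable manifold: every $u\in\cW_{\Phi,loc}^u(u^*)$ has a backward orbit remaining in the chosen neighborhood of $u^*$, so it possesses a preimage $v\in\cW_{\Phi,loc}^u(u^*)$ with $\Phi(v)=u$; since $\overline{\cW_{\Phi,loc}^u(u^*)}$ is compact, a convergent-subsequence argument extends this to the closure, and transporting through $R$ via \cref{prop:phi_is_cont} gives the claimed overflowing inclusion in observation space. Iterating it and using $W_{loc}^u(p)\subset C$ yields $W_{loc}^u(p)\subset\varphi^j(W_{loc}^u(p))\subset\varphi^j(C)$ for all $j$, and intersecting over $j$ gives $W_{loc}^u(p)\subset A_C$. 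This mirrors the proof of \cref{prop:convergence_embedded_invariant_sets}~(a), with full invariance replaced by overflowing invariance.

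For part (b) it remains to establish $A_C\subset W_{loc}^u(p)$. Given $x\in A_C\subset A_k$, set $w=\wt E(x)\in\cA$; for each $j\ge 1$ pick $y_j\in C$ with $\varphi^j(y_j)=x$ and put $u_j=\wt E(y_j)$. Because $y_j\in C\subset A_k$, the conjugacy identity of the first paragraph gives $x=R(\Phi^j(u_j))$, and injectivity of $R$ on $\cA$ forces $w=\Phi^j(u_j)$; moreover $u_j\in\wt E(C)=E(C)\subset\cU$. Now $\wt E(C)$ is compact and contained in the basin $\cU$, so the attracting property of $\overline{\cW_{\Phi,loc}^u(u^*)}$ yields $\operatorname{dist}(\Phi^j(v),\overline{\cW_{\Phi,loc}^u(u^*)})\to 0$ uniformly for $v\in\wt E(C)$. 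Evaluating at $v=u_j$ and using $\Phi^j(u_j)=w$ gives $\operatorname{dist}(w,\overline{\cW_{\Phi,loc}^u(u^*)})=0$, so $w\in\overline{\cW_{\Phi,loc}^u(u^*)}$ and hence $x=R(w)\in W_{loc}^u(p)$.

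The step I expect to be the crux is this final one: it relies on \emph{uniform} (Hausdorff) attraction on compact subsets of the basin, which is what lets the single point $w=\Phi^j(u_j)$ be squeezed onto $\overline{\cW_{\Phi,loc}^u(u^*)}$ even though the preimages $u_j$ vary with $j$ and do not chain into one backward orbit. I would therefore be careful to fix the notion of attracting set as uniform attraction of a neighborhood of $\overline{\cW_{\Phi,loc}^u(u^*)}$, which is the hypothesis implicitly in force; granting that, the remainder is the bookkeeping of the conjugacy identity and the elementary overflowing-invariance argument of part (a).
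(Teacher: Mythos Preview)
Your argument is correct. Part~(a) follows the same line as the paper: establish the overflowing invariance $W_{loc}^u(p)\subset\varphi(W_{loc}^u(p))$ and iterate. You spell out why overflowing invariance holds, whereas the paper simply asserts it and then invokes \cite[Lemma~4.1]{DHZ16} to pass from that inclusion to $W_{loc}^u(p)\subset A_C$.

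For part~(b) you take a genuinely different route. The paper does not argue directly with preimages and uniform attraction; instead it appeals to \cref{prop:convergence_embedded_invariant_sets}~(b) as a black box to obtain $W_{loc}^u(p)=A_C^\infty$ (where $A_C^\infty=\bigcap_{m\ge 1}A_C^m$ and $A_C^m$ is the relative global attractor for $\varphi_m=R\circ\Phi^m\circ E$), and then exploits the hypothesis $C\subset A_k$ to show $\varphi_m(C)=\varphi^m(C)$, which collapses $A_C^\infty$ to $A_C$. Your approach instead unpacks the attracting hypothesis at the level of $\Phi$: you lift $x\in A_C$ and its $j$-th preimages to $\cA$ via the conjugacy identity $\varphi^j|_{A_k}=R\circ\Phi^j\circ\wt E$, obtain $w=\Phi^j(u_j)$ with $u_j$ ranging in the compact set $\wt E(C)\subset\cU$, and then use uniform attraction on that compact set to force $w\in\overline{\cW_{\Phi,loc}^u(u^*)}$. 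Both arguments hinge on the same ingredient---that $C\subset A_k$ lets one iterate the conjugacy cleanly---but yours is self-contained and makes transparent exactly where uniform attraction enters, while the paper's is shorter because it recycles the machinery of \cref{prop:convergence_embedded_invariant_sets}. Your closing remark about needing uniform (Hausdorff) attraction on compact subsets of the basin is apt; that is precisely the content hidden inside the cited proposition.
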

\begin{proof}
	\begin{enumerate}
		\item[(a)] By assumption $W_{loc}^u(p) \subset A_k$ and by construction of the CDS (cf. \eqref{eq:condRE}) it is easy to see that $W_{loc}^u(p) \subset \varphi\left(W_{loc}^u(p)\right)$. Thus, by \cite[Lemma 4.1]{DHZ16} it follows that
		\begin{equation*}
		W_{loc}^u(p) \subset A_C.
		\end{equation*}
		\item[(b)] Define for $m \geq 1$ the continuous maps
		\begin{equation*}
		\varphi_m = R\circ \Phi^m \circ E
		\end{equation*}
		and denote the corresponding relative global attractors by $A_C^m$, where
		\begin{equation*}
		A_C^m = \bigcap_{j \geq 0} \varphi_m^j(C).
		\end{equation*}
		By \cref{prop:convergence_embedded_invariant_sets}~(b) we obtain
		\begin{equation*}
		W_{loc}^u(p) = A_C^\infty.
		\end{equation*}
		It remains to show that $A_C^\infty = A_C$. Since $C \subset A_k$, it is easy to see that
		\begin{equation*}
		\varphi_m(C) = \varphi^m(C) \quad \mbox{for all } m \in \N.
		\end{equation*}
		(cf. \eqref{eq:condRE}). Thus,
		\begin{equation*}
		A_C^\infty = \bigcap_{m\geq 1} \bigcap_{j \geq 0} \varphi_m^j(C) = \bigcap_{m\geq 1} \bigcap_{j \geq 0} \varphi^{jm}(C) = \bigcap_{i\geq 0}\varphi^{i}(C) = A_C.
		\end{equation*} 
	\end{enumerate}
\end{proof}

\begin{remark}\quad
	\begin{enumerate}
		\item[(a)] Observe that \cref{prop:elle}~(b) states that the embedding of the local unstable manifold (in infinite dimensional space) is identical to the local embedded unstable manifold.
		\item[(b)] If the steady state $u^*\in\cA$ is hyperbolic, then  $\overline{\cW_{\Phi,loc}^u(u^*)}$ is attractive since by assumption its dimension is finite (cf. \eqref{eq:unstable_manifold}). In particular, the compact set $C$ can be chosen, such that the assumed properties are satisfied.
	\end{enumerate}
\end{remark}

From now on we assume that the assumptions of \cref{prop:elle}~(b) are satisfied. The idea of the continuation algorithm is to globalize the local covering of $W_{loc}^u(p)$ in order to obtain an approximation of the entire embedded unstable manifold $W^u(p)$. 

\subsection{The continuation method}
The continuation starts at $p = R(u^*)$ of the embedded unstable manifold $W^u(p)$. We choose a compact set $Q \subset \R^k$ containing $p$ and we assume that $Q$ is large enough so that it contains the entire embedded unstable manifold of $p$, i.e.,
\begin{equation}
\label{eq:closure}
\overline{W^u(p)} \subset Q.
\end{equation}
We remark that this assumption can be relaxed, and we will discuss this point later in the context of the realization of the approximation scheme.

In order to combine the subdivision process with a continuation method, we realize the subdivision using a family of partitions of $Q$. We define a partition $\cP$ of $Q$ to be a finite family of compact subsets of $Q$ such that
\[
\bigcup_{B \in \cP}B = Q\quad \mbox{and}\quad \mbox{int} B\cap \mbox{int} B' = \emptyset,\ \mbox{for all } B, B'\in \cP,~ B\neq B'.
\]
Moreover, we denote by $\cP(x) \in \cP$ the element of $\cP$ containing $x\in Q$. We consider a nested sequence $\cP_s,\ s \in \N$, of successively finer partitions of $Q$, requiring that for all $B \in \cP_s$ there exist $B_1,\ldots,B_m \in \cP_{s+1}$ such that $B = \cup_i B_i$ and ${\diam(B_i)\leq\theta\diam(B)}$ for some $0 < \theta < 1$. A set $B \in \cP_s$ is said to be of \emph{level} $s$. 

In what follows, we assume that ${C = \cP_s(p) \subset A_k}$ for $s$ sufficiently large such that $p\in \mbox{int }C$. The purpose is to approximate subsets $W_j \subset W^u(p)$ where $W_0 = W_{loc}^u(p)$ and
\[
W_{j+1} = \varphi(W_j) \quad \mbox{for $j = 0,1,2,\ldots$}
\]
Here $\varphi$ is the CDS (see \eqref{eq:CDS}).

The numerical realization of the continuation algorithm for the approximation of embedded unstable manifolds can be described as follows:

\begin{algorithm}[!htb]
	\caption{The continuation method for embedded unstable manifolds}\label{alg:continuation}
	\vspace{1em}
	{\em Initialization:} Given $k>2(1+\sigma)d$ we choose an initial box $Q \subset \R^k$, defined by a $k$-dimensional generalized rectangle of
	the form
	\[
	Q(c,r) = \left\{ y \in \R^k: |y_i - c_i|\leq r_i \mbox{ for } i=1,\ldots,k \right\},
	\]
	where $c,r \in \R^k,\ r_i > 0$ for $i=1,\ldots,k$, are the center and the radii, respectively. Choose a partition $\cP_s$ of $Q$ and a box $C \in \cP_s$ such that $p = R(u^*) \in C$.
	\begin{enumerate}
		\item Apply the subdivision algorithm with $\ell$ subdivision steps to $\cB_0 = \{ C \}$ to obtain a covering $\cB_\ell \subset \cP_{s+\ell}$ of the local embedded unstable manifold $A_{C}$. 
		\item Set \[ C_0^{(\ell)} = \cB_\ell. \]
		\item For $j=0,1,2,\ldots$ define
		\begin{align}\label{eq:continuation}
		C_{j+1}^{(\ell)} = \left\lbrace B \in \cP_{s+\ell}: \exists B' \in C_j^{(\ell)} \mbox{ such that }B\cap \varphi(B') \neq \emptyset\right\rbrace.
		\end{align}
	\end{enumerate}
\end{algorithm}

\newpage
\begin{remark}\label{rem:nested}\quad
	\begin{enumerate}
		\item[(a)]
		Observe that the unions
		\[
		C_j^{(\ell)} = \bigcup_{B \in \cC_j^{(\ell)}} B
		\]
		form a nested sequence in $\ell$, i.e.,
		\[
		C_j^{(0)}\supset C_j^{(1)} \supset \ldots \supset C_j^{(\ell)} \ldots .
		\]
		In fact, it is also a nested sequence in $j$, i.e.,
		\[
		C_0^{(\ell)}\subset C_1^{(\ell)} \ldots \subset C_j^{(\ell)} \ldots.
		\]
		\item[(b)] Due to the compactness of $Q$ the continuation in Step 3 of \cref{alg:continuation} will terminate after finitely many, say $J_\ell$, steps. We denote the corresponding box covering obtained by the continuation
		method by
		\begin{equation}
		\label{eq:zeta}
		G_\ell = \bigcup_{j=0}^{J_\ell} C_j^{(\ell)} = C_{J_\ell}^{(\ell)}.
		\end{equation}
	\end{enumerate}
\end{remark}

In \cref{fig:continuation} we illustrate the continuation method described in \cref{alg:continuation}.

\begin{figure}[h]
	\begin{minipage}{0.325\textwidth}
		\includegraphics[width = \textwidth]{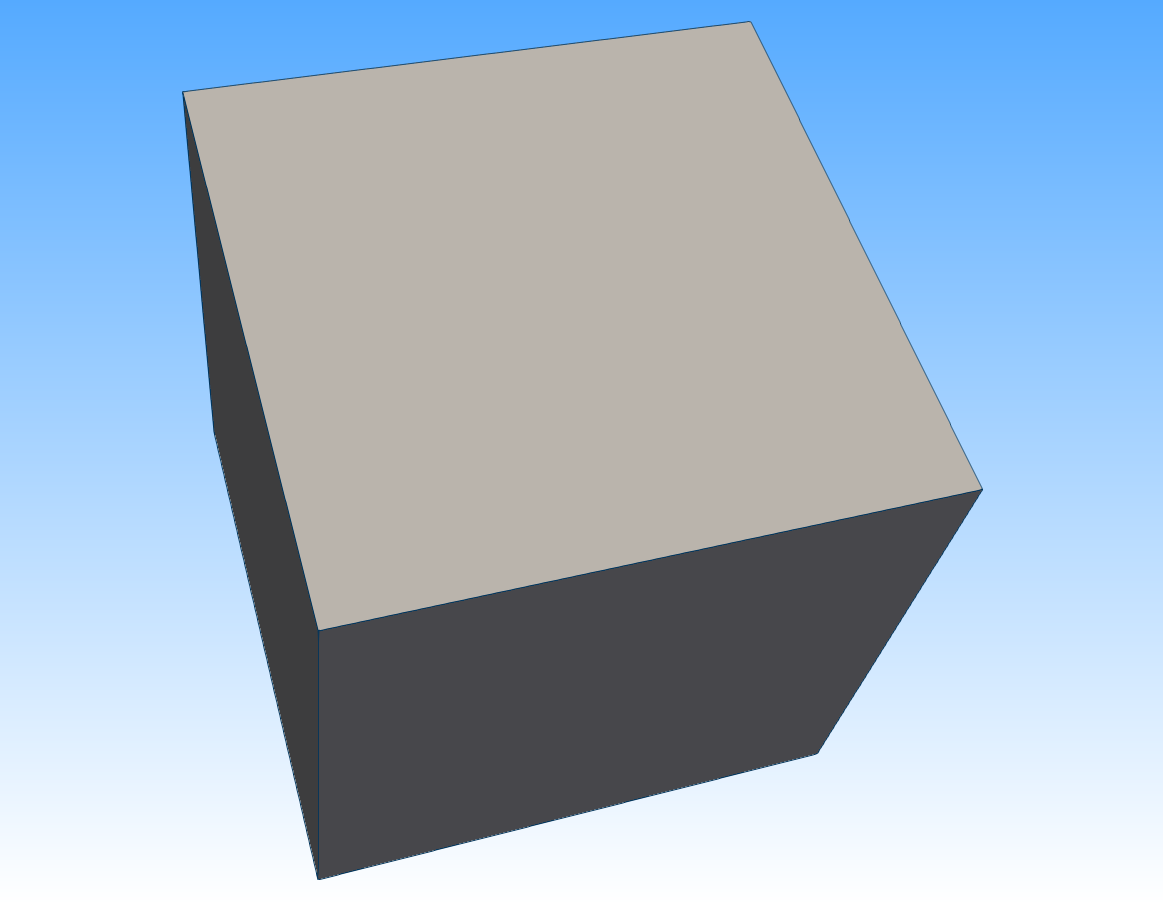}
		\centering \scriptsize{(a)}
	\end{minipage}
	\hfill
	\begin{minipage}{0.325\textwidth}
		\includegraphics[width = \textwidth]{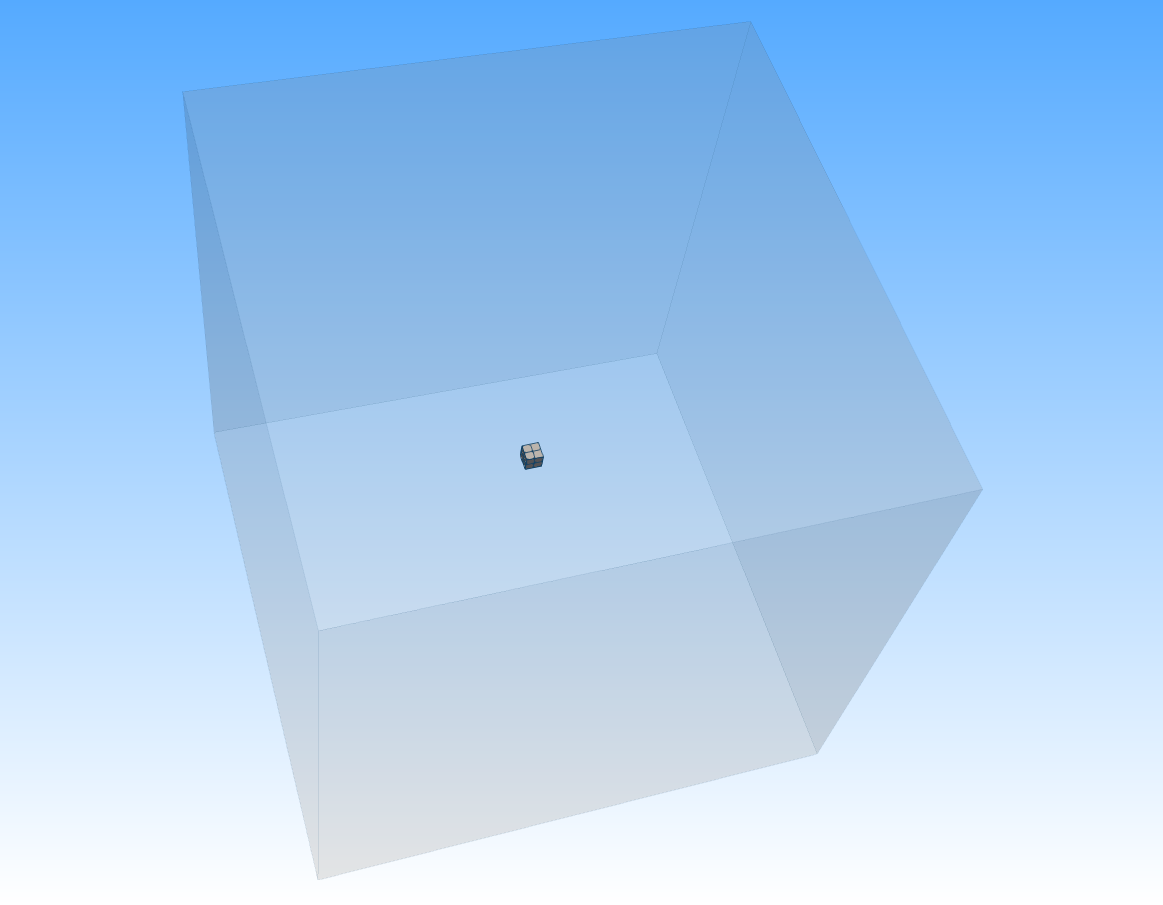}
		\centering \scriptsize{(b)}
	\end{minipage}
	\hfill
	\begin{minipage}{0.325\textwidth}
		\includegraphics[width = \textwidth]{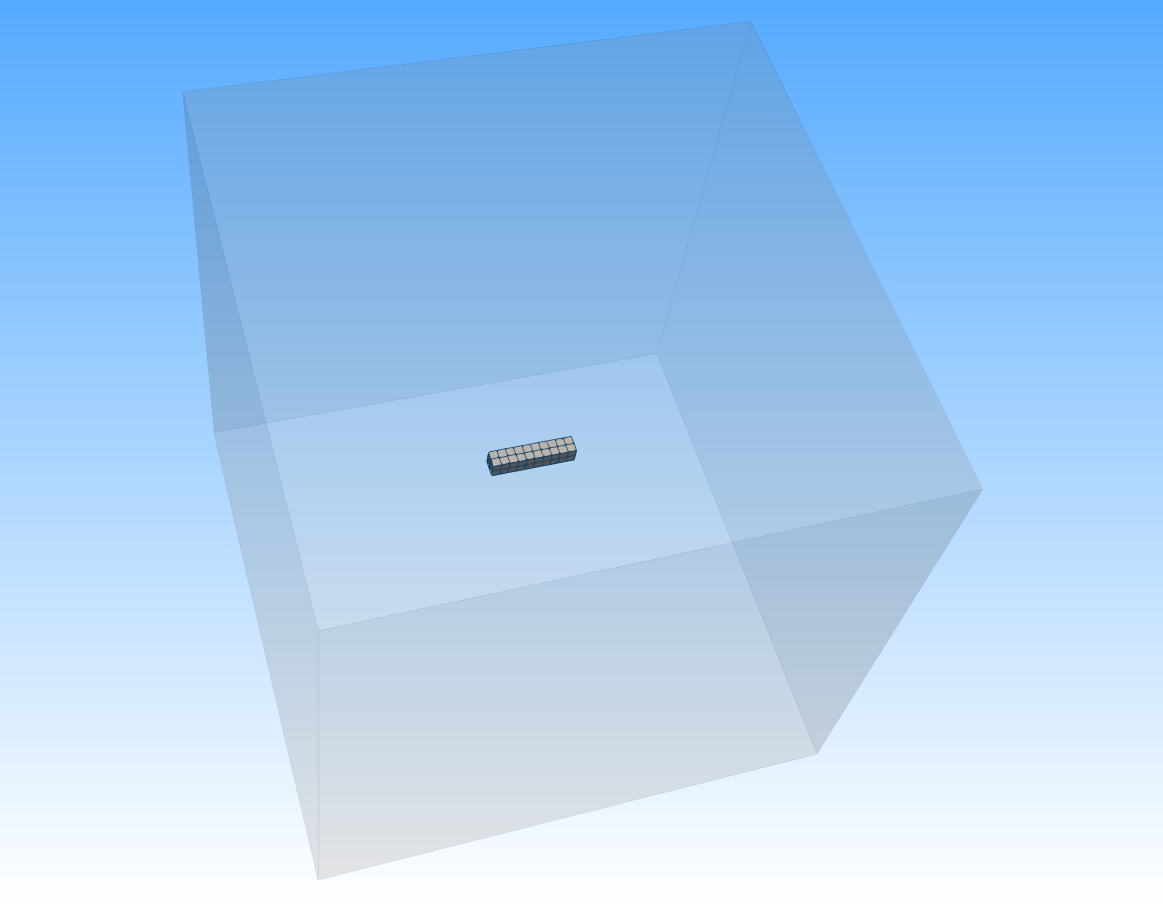}
		\centering \scriptsize{(c)}
	\end{minipage}\\[1em]
	\begin{minipage}{0.325\textwidth}
		\includegraphics[width = \textwidth]{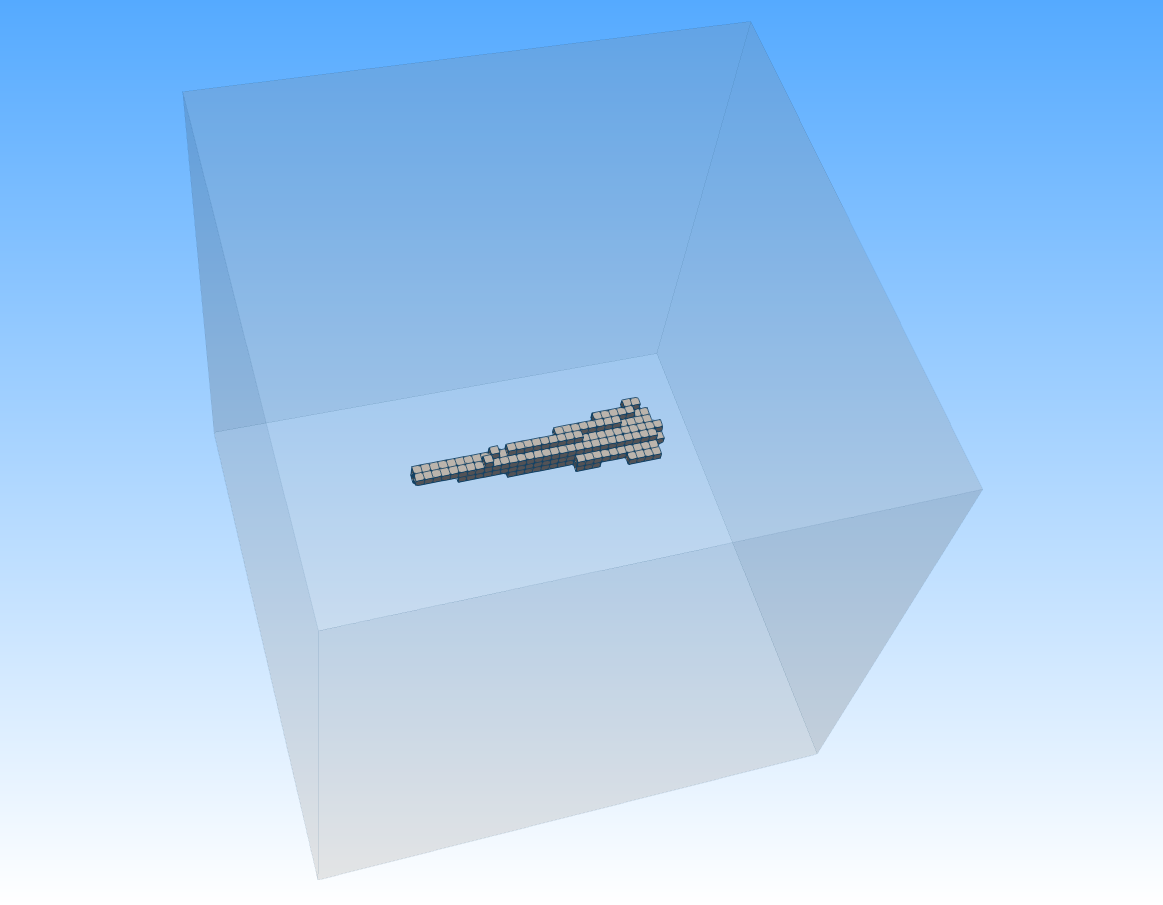}
		\centering \scriptsize{(d)}
	\end{minipage}
	\hfill
	\begin{minipage}{0.325\textwidth}
		\includegraphics[width = \textwidth]{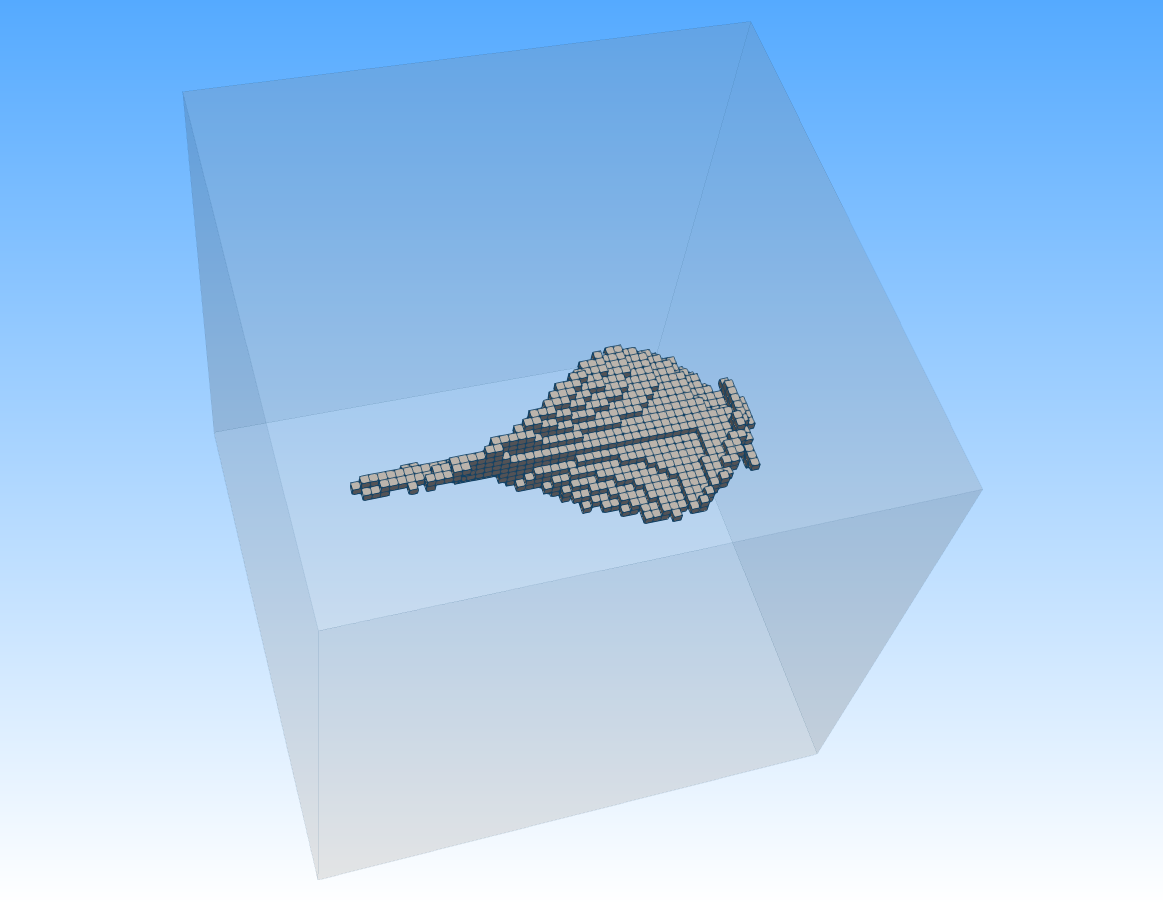}
		\centering \scriptsize{(e)}
	\end{minipage}
	\hfill
	\begin{minipage}{0.325\textwidth}
		\includegraphics[width = \textwidth]{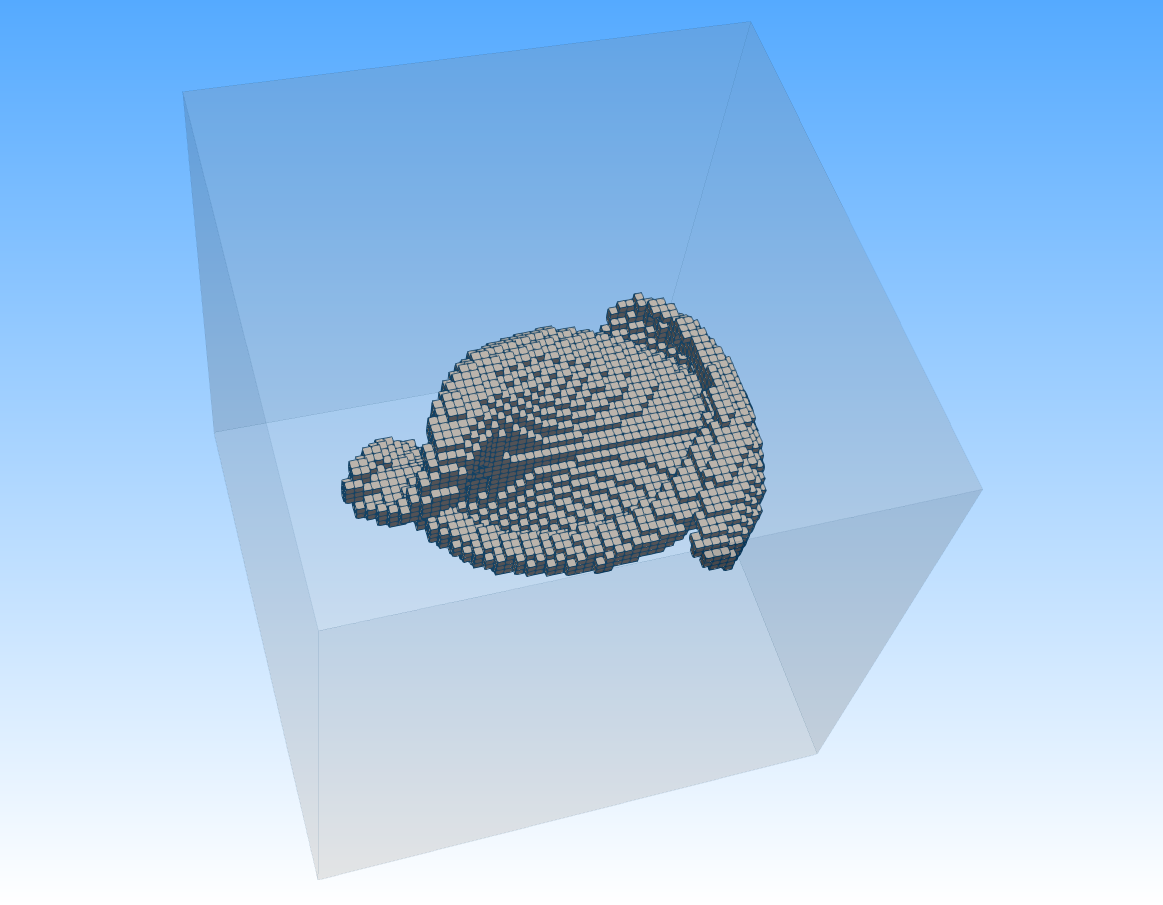}
		\centering \scriptsize{(f)}
	\end{minipage}
	\caption{(a) Initial box $Q \supset A_k$. (b) Let $C \in \cP_{s}$ be the box containing $p=R(u^*)$ ($\ell = 0$). (c) First continuation step. (d)-(f) Repeat step (c) with those boxes that are contained in the previous covering until no additional boxes were marked. A detailed discussion of the underlying dynamical system can be found in \Cref{sssec:oseberg}.}
	\label{fig:continuation}
\end{figure}

Intuitively it is clear that the algorithm, as constructed, generates an approximation of the embedded unstable manifold $W^u(p)$. In particular, we expect that the bigger $s$ and $\ell$ are the better the approximation will be.

\begin{proposition}\label{prop:convergenceCC}\quad
	\begin{enumerate}
		\item[(a)] The sets $C_j^{(\ell)}$ are coverings of $W_j$ for all $j,\ell = 0,1,\ldots$. Moreover, for fixed $j$, we have
		\[
		\bigcap_{\ell = 0}^\infty C_j^{(\ell)} = W_j.
		\]
		\item[(b)] Suppose that $\overline{W^u(p)}$ is linearly attractive, i.e., there is a $\lambda \in (0,1)$ and a neighborhood $U\supset Q \supset \overline{W^u(p)}$ such that
		\begin{equation}\label{eq:basin}
		\setdist{\varphi(y),\overline{W^u(p)}}\leq \lambda~\setdist{y,\overline{W^u(p)}} \quad\forall y\in U.
		\end{equation}
		Then the box coverings obtained by \cref{alg:continuation} converge to the closure of the embedded unstable manifold $\overline{W^u(p)}$. That is,
		\[
		\bigcap_{\ell = 0}^\infty  G_\ell=\overline{W^u(p)}.
		\]
	\end{enumerate}
\end{proposition}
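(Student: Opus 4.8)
The plan is to handle the two parts by different means: part~(a) follows the convergence scheme of \cite{DH96} and rests only on the nestedness of the coverings together with compactness and continuity of $\varphi$, while part~(b) combines the union structure of the unstable manifold with a shadowing estimate that is made quantitative by the linear attractivity~\eqref{eq:basin}.

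For part~(a) I would first prove the covering property $W_j\subset C_j^{(\ell)}$ by induction on $j$. The case $j=0$ is the statement that the subdivision covering $\cC_0^{(\ell)}=\cB_\ell$ contains the relative global attractor $A_C$, which equals $W_0=W_{loc}^u(p)$ under the standing assumptions (\cref{prop:elle}~(b)). For the inductive step, given $y\in W_{j+1}=\varphi(W_j)$ write $y=\varphi(x)$ with $x\in W_j\subset C_j^{(\ell)}$, so $x\in B'$ for some $B'\in\cC_j^{(\ell)}$; since $y\in\overline{W^u(p)}\subset Q$ there is a box $B\in\cP_{s+\ell}$ with $y\in B$, and then $y\in B\cap\varphi(B')$, so $B\in\cC_{j+1}^{(\ell)}$ by~\eqref{eq:continuation} and $y\in C_{j+1}^{(\ell)}$. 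The identity $\bigcap_\ell C_j^{(\ell)}=W_j$ I would then prove by a second induction on $j$: the inclusion $\supset$ is the covering property, and for $\subset$ I take $y\in\bigcap_\ell C_{j+1}^{(\ell)}$, choose for each $\ell$ a predecessor box $B'_\ell\in\cC_j^{(\ell)}$ and a point $x_\ell\in B'_\ell$ with $\varphi(x_\ell)$ lying in the box of $y$, and extract a convergent subsequence $x_{\ell_m}\to x^\ast$ from the compact set $Q$. Because the $C_j^{(\ell)}$ are closed and nested decreasing in $\ell$ (\cref{rem:nested}~(a)), the limit satisfies $x^\ast\in\bigcap_\ell C_j^{(\ell)}=W_j$ by the inductive hypothesis; and since $\diam(\cP_{s+\ell})\to0$, the images $\varphi(x_{\ell_m})$ converge both to $y$ and, by continuity, to $\varphi(x^\ast)$, whence $y=\varphi(x^\ast)\in W_{j+1}$.

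For part~(b) the inclusion $\overline{W^u(p)}\subset\bigcap_\ell G_\ell$ is the easy direction: each $W_j$ lies in $C_j^{(\ell)}\subset G_\ell$ by part~(a) and~\eqref{eq:zeta}, and since $W^u(p)\subset\bigcup_{j\ge0}W_j\subset\overline{W^u(p)}$ and every $G_\ell$ is closed, $\overline{W^u(p)}\subset G_\ell$ for all $\ell$. The substance is the reverse inclusion. Given $y\in\bigcap_\ell G_\ell$, I would for each $\ell$ unwind the continuation~\eqref{eq:continuation} into a chain of boxes $B^{(0)},\dots,B^{(n)}$ with $B^{(0)}\in\cB_\ell$, $y\in B^{(n)}$ and $B^{(i)}\cap\varphi(B^{(i-1)})\neq\emptyset$, pick representatives $\xi_i\in B^{(i)}$, and verify that $(\xi_i)$ is an $\varepsilon_\ell$-pseudo-orbit with $\varepsilon_\ell=\diam(\cP_{s+\ell})+\omega(\diam(\cP_{s+\ell}))$, where $\omega$ is the modulus of uniform continuity of $\varphi$ on the compact set $Q$. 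Writing $d_i=\setdist{\xi_i,\overline{W^u(p)}}$ and using $\xi_i\in Q\subset U$ with~\eqref{eq:basin}, the triangle inequality gives the recursion $d_i\le\lambda d_{i-1}+\varepsilon_\ell$, which sums to $d_n\le\lambda^n d_0+\varepsilon_\ell/(1-\lambda)$. The initial value is controlled by $\rho_\ell=\sup_{x\in C_0^{(\ell)}}\setdist{x,W_0}$, and $\rho_\ell\to0$ because $C_0^{(\ell)}$ decreases to $W_0$ in the Hausdorff metric — this is exactly part~(a) for $j=0$. Hence $\setdist{y,\overline{W^u(p)}}\le\diam(\cP_{s+\ell})+\rho_\ell+\varepsilon_\ell/(1-\lambda)\to0$, forcing $y\in\overline{W^u(p)}$.

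The hard part will be the uniformity of this estimate in the length of the continuation chain: a priori $n\le J_\ell$ grows with $\ell$, and without a contraction the $O(n)$ pseudo-orbit jumps of size $\varepsilon_\ell$ could accumulate without bound. The role of the linear attractivity~\eqref{eq:basin} is precisely that the factor $\lambda<1$ collapses the error sum into the convergent series $\varepsilon_\ell/(1-\lambda)$, so that the bound $d_n\le\rho_\ell+\varepsilon_\ell/(1-\lambda)$ is independent of $n$; this is what upgrades the compact-subset convergence of \cite{DH96} (our part~(a)) to convergence on the entire closure $\overline{W^u(p)}$. The only side condition needing care is that all $\xi_i$ remain in the region $U$ where~\eqref{eq:basin} applies, which is guaranteed here by $\xi_i\in Q\subset U$.
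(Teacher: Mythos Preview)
Your argument is correct and follows essentially the same route as the paper: part~(a) is the compactness/continuity argument that the paper only sketches as ``a continuity argument'', and part~(b) is the same pseudo-orbit recursion $d_i\le\lambda d_{i-1}+\varepsilon_\ell$ summed geometrically. The one noteworthy difference is in how the initial term is handled in~(b): the paper keeps $d_0$ merely bounded and instead proves separately that the chain length $j(\ell)\to\infty$ (so that $\lambda^{j(\ell)}d_0\to0$), whereas you control $d_0\le\rho_\ell\to0$ directly from part~(a), which makes the unboundedness step unnecessary and yields a bound independent of the chain length---a mild streamlining of the paper's proof.
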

\begin{proof}
	\begin{enumerate}
		\item[(a)] Applying the subdivision algorithm with $\ell$ subdivision steps to the initial covering ${\cB_0 = \{C\}}$, we obtain $\cB_\ell \subset \cP_{s+\ell}$ of $A_C$, that is,
		\[
		A_C\subset \bigcup_{B \in \cB_\ell} B.
		\]
		Here we assume that the subdivision scheme is creating coverings using elements from the partitions $\cP_n$. 
		By \cref{prop:elle}~(b) $A_C = W_{loc}^u(p)$ and by \cref{prop:AQ_Qinfty} the box coverings converge to $A_C$ for $\ell\rightarrow \infty$. Therefore, $A_C = W_{loc}^u(p) = W_0$ for $\ell \to \infty$. Since $j$ is fixed a continuity argument shows that the sets $C_j^{(\ell)}$ converge to $W_j$ for $\ell \rightarrow \infty$, i.e.,
		\[
		\bigcap_{\ell = 0}^\infty C_j^{(\ell)} = W_j.
		\]
		\item[(b)] For each $\ell$ \cref{alg:continuation} yields a covering of $\overline{W^u(p)}$, and therefore
		\[
		\bigcap_{\ell = 0}^\infty  G_\ell\supset\overline{W^u(p)}.
		\]
		Suppose there is $x\in \bigcap_{\ell = 0}^\infty  G_\ell \setminus \overline{W^u(p)}$. Since $\overline{W^u(p)}$ is compact, it follows that $\setdist{x,\overline{W^u(p)}}>0$.
		By definition of $x$, \cref{alg:continuation} generates a $\mbox{diam}(\cB_{\ell})$-pseudo orbit $\{x_0,\ldots,x_{j(\ell)}\}$ for each $\ell\geq 0$, where $x_{j(\ell)} = x$. That is
		\[
		x_{j}\in C_{j}^{(\ell)} \mbox{ and } \varphi(x_{j})\in \cP_{s+\ell}(x_{j+1}) \quad\forall j\in\{0,\ldots,j(\ell)-1\}.
		\]
		Here $\cP_{s+\ell}(x_{j+1})\subset C_{j+1}^{(\ell)}$ denotes the element of $\cP_{s+\ell}$ containing $x_{j+1}\in C_{j+1}^{(\ell)}$ and $j(\ell)=\min\{j\in\{0,\ldots,J_\ell\}:x\in C_j^{(\ell)}\}$, i.e., for each $j\geq j(\ell)$ continuation steps $x$ is covered. Observe that the sequence $j(\ell)$ is monotonically increasing in $\ell$ (cf. Step 3 of \cref{alg:continuation} and \cref{rem:nested}) and
		\begin{align}\label{eq:box_diam}
		\|{x_j-\varphi(x_{j-1})}\|\leq \diam(B_{\ell}) \quad \forall j\in \{0,\ldots,j(\ell)-1\}.
		\end{align}
		We first show by contradiction that $j(\ell)$ is unbounded. Suppose that $j(\ell)$ is bounded by some $J\in \N_0$, i.e., $\max{j(\ell)}=J$. Hence, by monotony of $j(\ell)$ there is $\ell_0\in \N_0$ such that $j(\ell)=J$ for all $\ell\geq \ell_0$. Using \cref{rem:nested}~(a) we have
		\[
		x\in \left(\bigcap_{\ell=0}^{\ell_0-1} C_{j(\ell)}^{(\ell)}\right) \cap \left(\bigcap_{\ell = \ell_0}^\infty C_J^{(\ell)}\right) \subset \bigcap_{\ell = \ell_0}^\infty C_J^{(\ell)}=\bigcap_{\ell = 0}^\infty C_J^{(\ell)}.
		\]
		However, by \cref{prop:convergenceCC}~(a) it follows that $x\in W_J\subset \overline{W^u(p)}$ which is a contradiction to $\setdist{x,\overline{W^u(p)}}>0$. Thus, $j(\ell)$ is unbounded.

		By assumption $\overline{W^u(p)}$ is linearly attractive in a neighborhood $U$. Hence, we can use \eqref{eq:basin} and \eqref{eq:box_diam} on the $\mbox{diam}(\cB_{\ell})$-pseudo orbit $\{x_0,\ldots,x_{j(\ell)}\}$, where $x_{j(\ell)} = x$, in combination with the triangle inequality to obtain
		\begin{align*}
		\setdist{x,\overline{W^u(p)}}&\leq \setdist{\varphi(x_{j(\ell)-1}),\overline{W^u(p)}}+\diam(\cB_\ell)\\
		&\leq \lambda~\setdist{x_{j(\ell)-1},\overline{W^u(p)}}+\diam(\cB_\ell)\\
		&\ \vdots\\
		&\leq \lambda^{j(\ell)}~\setdist{x_0,\overline{W^u(p)}}+\diam(\cB_\ell)\sum_{i=0}^{j(\ell)-1}\lambda^i\\
		&\leq \lambda^{j(\ell)}~\setdist{x_0,\overline{W^u(p)}}+\frac{\diam(\cB_\ell)}{1-\lambda} \quad\longrightarrow 0 \mbox{ for } \ell \rightarrow\infty.
		\end{align*}
		Here the last expression converges to zero because
		$\lambda\in (0,1)$
		and $\diam(\cB_\ell)$ converges to zero for $\ell\rightarrow \infty$ (see \eqref{eq:diamB}). Again we have an contradiction to $\setdist{x,\overline{W^u(p)}}>0$. It follows that
		\[
		\bigcap_{\ell = 0}^\infty  G_\ell\subset\overline{W^u(p)},
		\]
		which yields the desired statement.
		
	\end{enumerate}
\end{proof}

\begin{remark}\label{rmk:assum}\quad
	\begin{enumerate}
		\item[(a)] If \eqref{eq:closure} would not be satisfied, then it
		can in general not be guaranteed that the continuation method leads to an approximation
		of the entire set $\overline{W^u(p)}$ or even ${\overline{W^u(p)}\cap Q}$. Rather it has to be expected that this is not the case. The reason is that the embedded unstable manifold $W^u(p)$ may 'leave' $Q$ but may as well 'wind back' into it. In this scenario the continuation method, as described above, will not cover all of $\overline{W^u(p)}\cap Q$. This situation is illustrated in \cref{fig:hyperbolicFixpoint}~(a).
		\item[(b)] The assumption in \cref{prop:convergenceCC}~(b) is, for instance, not satisfied
		if $\overline{W^u(p)}$ forms a heteroclinic connection between
		the steady state solution $p$ and another unstable hyperbolic steady state $q$.
		In fact, in this case the algorithm would also generate a covering of the embedded unstable manifold of $q$. This situation is illustrated in \cref{fig:hyperbolicFixpoint}~(b).
		\item[(c)] If \eqref{eq:basin} is not satisfied, but $\overline{\cW_{\Phi}^u(u^*)}$ is attractive, one can apply the subdivision scheme introduced in \cite{DHZ16} to $G_\ell$ in order to approximate $\overline{W^u(p)}$ more accurately (cf. \cref{prop:convergence_embedded_invariant_sets}~(b)).
		\item[(d)] Note that \eqref{eq:basin} is satisfied if the observation map $R$ is bi-Lipschitz with Lipschitz constant $L\geq 1$ and $\overline{\cW_{\Phi}^u(u^*)}$ is linearly attractive with $\lambda \in (0,L^{-2})$.
	\end{enumerate}
\end{remark}

\begin{figure}[htb]
	\begin{minipage}{0.425\textwidth}
		\includegraphics[width = \textwidth]{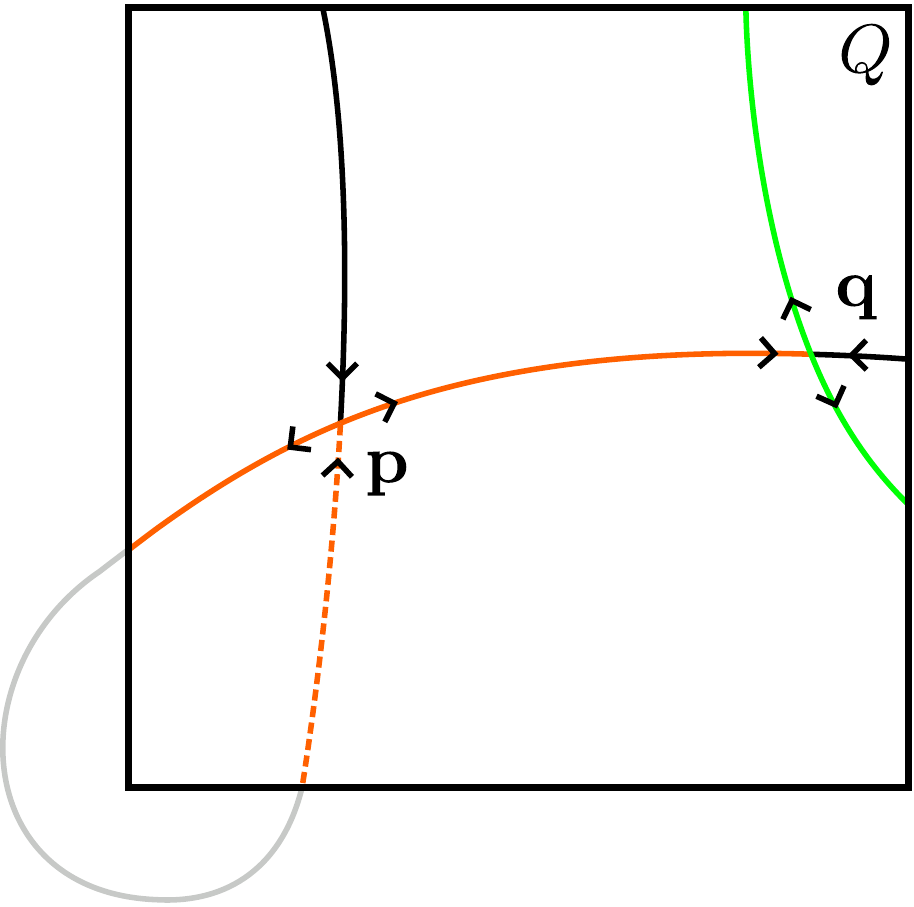}
		\centering (a)
	\end{minipage}
	\hfill
	\begin{minipage}{0.425\textwidth}
		\includegraphics[width = \textwidth]{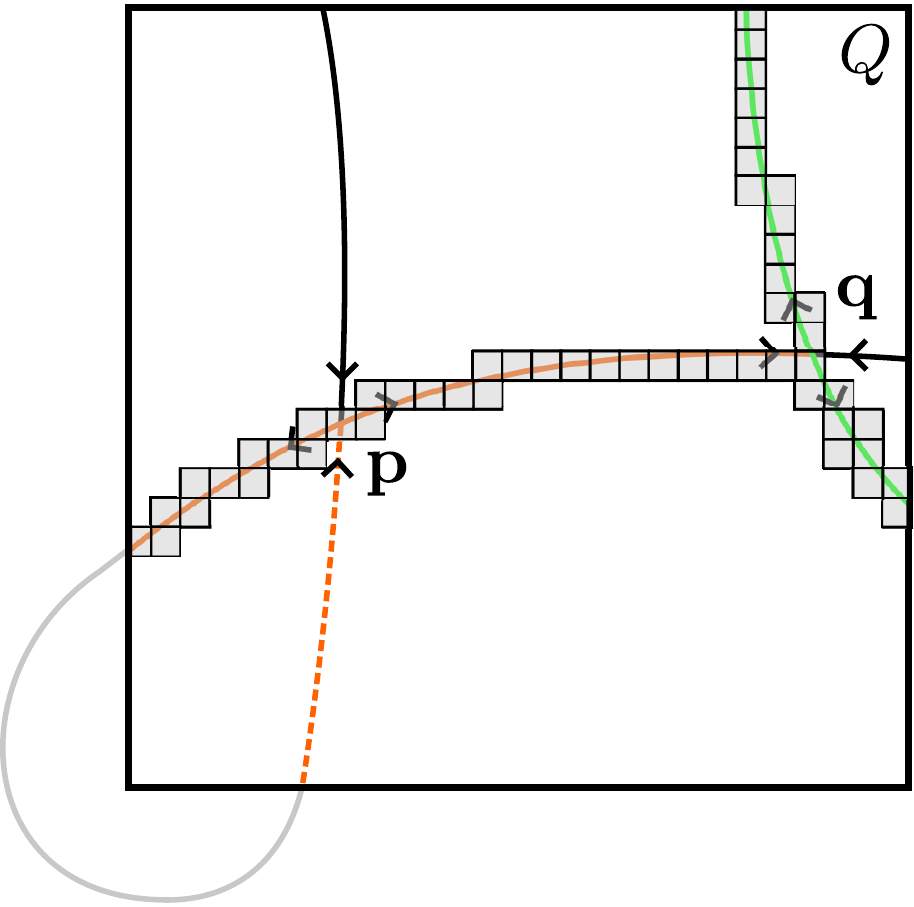}
		\centering (b)
	\end{minipage}
	\caption{Illustration of the possible situations discussed in \cref{rmk:assum}~(a) and (b). \newline
		(a) The dashed line will not be covered by the continuation method and thus, we will not approximate the entire set $W^u(p)\cap Q$. (b)	Schematic box covering obtained by the continuation method, where in this particular case we also obtain a covering of $W^u(q)\cap Q$.}
	\label{fig:hyperbolicFixpoint}
\end{figure}

\section{Numerical realization of the CDS $\varphi$ for partial differential equations}
\label{sec:num_real_PDE}

As discussed in the introduction, dynamical systems with infinite dimensional state space, but finite dimensional attractors arise in particular in two areas of applied mathematics, namely dissipative partial differential equations and delay differential equations with small constant time delay. In this article we will focus on the PDE case, and we will present one specific realization of the maps $R$ and $E$ for this situation.

More precisely, we will consider explicit differential equations of the form
\begin{align}
\frac{\partial}{\partial t} u(y,t)=F(y,u),~u(y,0)=u_0(y),
\label{eq:PDE}
\end{align}
where $u:\R^n\times \R \rightarrow \R^n$ is in some Banach space $Y$ and $F$ is a (nonlinear) differential operator. We assume that the dynamical system $\eqref{eq:PDE}$ has a well-defined semiflow on $Y$.

In order to numerically realize the construction of the map $\varphi = R\circ\Phi\circ E$ described in \cref{sec:comp_emb_att}, we have to work on three tasks: the implementation of $E$, the implementation of $R$, and the realization of the time-$T$-map of \eqref{eq:PDE}, denoted by $\Phi$, respectively. For the latter we will rely on standard methods for forward time integration of PDEs, e.g., a fourth-order time stepping method for the one-dimensional Kuramoto-Sivashinsky equation \cite{KT05}. Observe that the numerical realization of $\Phi$ strongly depends on the underlying PDE. The map $R$ will be realized on the basis of \cref{thm:HK99} or \cref{rmk:ext_R05}, respectively. For the numerical construction of the continuous map $E$ we will present a new method that uses statistical information from previous computations. This step is in particular crucial for the continuation step, since we want to restart the algorithm with initial conditions that satisfy the identities
\begin{equation*}
(E\circ R)(u) = u\quad \forall u\in  \mathcal{W}_\Phi^u(u^*)
\quad\text{ and }\quad
(R\circ E)(x) = x\quad \forall x\in W^u(p).
\end{equation*}
(cf. \eqref{eq:condRE}) at least approximately.

From now on we assume that upper bounds for both the box counting dimension $d$ and the thickness exponent $\sigma$ are available.
This allows us to fix $k > 2(1+\sigma)d$ according to \cref{rmk:ext_R05}.
\subsection{Numerical realization of $R$}
\label{ssec:num_real_R}

In \cite{DHZ16} $R$ has been defined on the basis of \cref{thm:R05}, where function evaluations at a fixed time have been used. Thus, in the case of scalar delay differential equations $R$ has been defined as the delay coordinate map
\begin{align}\label{eq:R_delay}
R = D_k[f,\Phi] (u) = (u(-\tau),\Phi(u)(-\tau),\ldots,\Phi^{k-1}(u)(-\tau))^T,
\end{align}
where $\tau>0$ is the constant time-delay of the underlying DDE. In principle it would also be possible to observe the evolution of a partial differential equation by a delay coordinate map. However, from a computational point of view this would be very inefficient. The reason is that for the realization of the map ${E:\R^k \to Y}$ one would have to reconstruct functions from time delay coordinates. Thus, for each point in observation space one would essentially have to store the entire corresponding function. To overcome this problem, in this work we will present a different approach. In what follows, we will assume that the function $u \in Y$ can be represented in terms of an orthonormal basis $\{\Psi_i\}_{i=1}^\infty$, i.e.,
\begin{align}\label{eq:u_basis}
u(y,t) = \sum_{i=1}^{\infty} x_i(t) \Psi_i(y),
\end{align}
where the $\Psi_i$ are elements from a Hilbert space (e.g., $L^2$).
Then our observation map $R$ will be defined by projecting a function onto $k$ coefficients $x_i$ of its Galerkin expanion. For the approximation of $u$, i.e.,
\begin{align}\label{eq:u_galerkin}
u(y,t) \approx \sum_{i=1}^{S} x_i(t) \Psi_i(y)
\end{align}
we want to use an optimal basis $\{ \Psi_i \}_{i=1}^S$ (i.e., as small as possible) in the sense that it contains the \emph{'most characteristic'} data from an ensemble of functions. The notion 'most characteristic' implies the use of an averaging operation. Furthermore, this basis has to be capable of representing the solution $u \in Y$ of the underlying PDE \eqref{eq:PDE} with a small error. Both requirements can be addressed by the \emph{proper orthogonal decomposition} (POD) (c.f.~\cite{Sir87,BHL93,HLB+12}), also known as the \emph{principal component analysis} or the \emph{Karhunen-Lo{\`e}ve transformation}. In order to compute a basis $\{\Psi_i\}$ for $i=1,\ldots,S$ and $S > k$, we first generate time-snapshots of a long-time simulation for some set of initial conditions of the underlying PDE. Then we approximate the POD-basis via the singular value decomposition (e.g., \cite{Chat00, LLL+02,Volk11}). Using this basis we then approximate $u \in Y$ by \eqref{eq:u_galerkin} where $x_i(t)$ denotes the $i$-th POD-coefficient at time $t$.

Given the basis $\{ \Psi_i \}_{i=1}^S$ and using the fact that this basis is orthogonal, we then define the observation map by choosing $k$ different observables
\begin{equation}
f_i(u) = \langle u, \Psi_i \rangle = x_i \quad \mbox{for }i=1,\ldots,k.
\end{equation}
This yields
\begin{equation}\label{eq:numR}
R(u) = (f_1(u), \ldots, f_k(u))^\top = (x_1, \ldots, x_k)^\top.
\end{equation}
Observe that $R$ is linear and bounded and hence, for $k$ sufficiently large, \cref{thm:HK99} and \cref{rmk:ext_R05}, respectively, guarantee that generically (in the sense of prevalence) $R$ will be a one-to-one map on $\cA$.

\subsection{Numerical realization of $E$}
\label{ssec:num_real_E}
In the application of the continuation scheme for the computation of embedded unstable manifolds $W^u(p)$ described in \cref{sec:continuation} one has to perform the continuation step
\begin{align*}
\cC_{j+1}^{(\ell)} = \left\lbrace B \in \cP_{s+\ell}: \exists B' \in \cC_j^{(\ell)} \mbox{ such that }B\cap \varphi(B') \neq \emptyset\right\rbrace
\end{align*}
(see \eqref{eq:continuation}). Numerically this is realized as follows: At first $\varphi$ is evaluated for a large number of test points $x \in B'$ for each box $B' \in \cC_j^{(\ell)}$. Then a box $B \in \cP_{s+\ell}$ is added to the collection
$\cC_{j+1}^{(\ell)}$ if there is a least one $x \in B'$ such that $\varphi(x)\in B$.

\begin{remark}\label{rmk:testpoints}
	In practice the test points $x\in B'$ can be chosen according to several different
	strategies: In low dimensional problems one can choose them
	from a regular grid within each box $B$. Alternatively one can select the test points from the boundaries of the boxes. In our computations we use a Monte Carlo sampling.
\end{remark}

By \cref{ssec:num_real_R} the state space for the CDS $\varphi$ is given by points $x \in \R^k$ where $x_1,\ldots,x_k$ are the POD-coefficients. For the evaluation of $\varphi = R\circ \Phi \circ E$ at a test point $x$ we need to define the image $E(x)$, that is, we need to generate adequate initial conditions for the forward integration of the PDE \eqref{eq:PDE}. In the first step of the continuation method we proceed as follows. Given the POD-basis $\{\Psi_i\}$ for $i = 1,\ldots,S$ and $S>k$, we simply construct initial conditions $u = E(x)$ near the unstable (hyperbolic) steady state by defining the map $E$ as
\begin{equation}\label{eq:numE}
E(x) = \sum_{i=1}^{k} x_i \Psi_i.
\end{equation}
Observe that by this choice of $E$ and $R$ both conditions $(R\circ E)(x) = x$ and ${(E \circ R)(u) = u}$ are satisfied for each test point $x \in \cC_0^{(\ell)}$ (see \eqref{eq:condRE} and \eqref{eq:numR}). 
Then we use the time-$T$-map $\Phi$ of the underlying PDE to obtain a function $\bar u = \Phi(E(x))$. If $k$ is not sufficiently large, then $(E\circ R)(\bar u) = \bar u$ will in general not be satisfied anymore. Therefore, it is possible that initial conditions $u=E(x)$ for $x \in \cC_{j+1}^{(\ell)}$, $j=0,1, \ldots$, generated by \eqref{eq:numE} are not even close to the unstable manifold. This is not acceptable since the requirement $(E\circ R)(u) = u$ for all $u\in \cW_\Phi^u(u^*)$ (see \eqref{eq:condRE}) is crucial in order to compute a reliable covering of the embedded unstable manifold. 

To enforce this equality at least approximately we extend the expansion and construct initial functions by
\begin{equation}\label{eq:E_extended}
E(x) = \sum_{i=1}^{k} x_i\Psi_i + \sum_{l=k+1}^{S}x_l\Psi_l.	
\end{equation}
Here only the first $k$ POD-coefficients are given by the coordinates of points inside $B \subset \R^k$. Thus, it remains to discuss how to choose the POD-coefficients $x_{k+1},\ldots,x_S$. The idea is to use a new heuristic strategy that utilizes statistical information obtained in the previous continuation step:
Suppose we want to evaluate $\varphi$ for a large number of test points $x$ in a box $B\in \cC_{j+1}^{(\ell)}$. By the continuation step (cf.~\cref{alg:continuation}), there must have been at least one ${\hat B\in\cC_{j}^{(\ell)}}$ such that $\bar x = R(\Phi(E(\hat x)))\in B$ for at least one test point $\hat x\in \hat B$. For all these points $\bar x$ we can compute the POD-coefficients $\bar x_{k+1},\ldots, \bar x_S$ by
\begin{align*}
\bar x_i = \langle \Phi(E(\hat x)), \Psi_i \rangle, \quad i=k+1,\ldots,S.
\end{align*}
Then we sample the box $B$ with all points $\bar x$ for which additional information is available. However, the number of these points $\bar x$ might be too small, such that $B$ is not discretized sufficiently well and we have to generate additional test points. For this, we first choose a certain number of points $\widetilde x \in B$ at random. Then we extend these points to elements in $\R^S$ as follows: We first compute componentwise the mean value $\mu_i$ and the variance $\sigma_i^2$ of all POD-coefficients $\bar x_{i}$, for $i = k+1,\ldots,S$. This allows us to make a Monte Carlo sampling for the additional coefficients of $\widetilde x_i$ for $i=k+1,\ldots,S$, i.e.,
\begin{equation*}
\widetilde{x}_i \sim \cN(\mu_i,\sigma_i^2) \quad \mbox{ for }i=k+1,\ldots,S.
\end{equation*}
Finally, we compute initial functions of the form
\[
E(\widetilde x) = \sum_{i=1}^{S} \widetilde x_i\Psi_i.
\] 
By this construction we expect in each continuation step to generate initial functions that satisfy an approximation of the identity $(E\circ R)(u) = u$ for all $u\in \cW_\Phi^u(u^*)$. We will illustrate our statistical approach in the next section for the one-dimensional Kuramoto-Sivashinsky equation.

\section{Numerical results}
\label{sec:numex}
In this section we present results of computations carried out for the Mackey-Glass delay differential equation and the Kuramoto-Sivashinsky equation, respectively. For the numerical realization of the CDS $\varphi$ for delay differential equations the reader is referred to \cite{DHZ16}.

\subsection{The Kuramoto-Sivashinsky equation}
We start with the well-known  \\
\mbox{Kuramoto-Sivashinsky} equation in one spatial dimension which is given by
\begin{equation}
\begin{aligned}\label{eq:KS}
&u_t + \nu u_{yyyy} + u_{yy} + \frac{1}{2}(u_y)^2 = 0, \quad 0 \leq y \leq L,\\
&u(y,0) = u_0(y), \quad u(y+L,t) = u(y,t).
\end{aligned}
\end{equation}
This equation has been studied extensively over the past $40$ years. It has, for instance, been used to model phase dynamics in reaction-diffusion systems \cite{KT76} or small thermal diffusive instabilities in laminar flame fronts \cite{Siv77}. Following \cite{HNZ86,KNS90}, we normalize the K-S equation to an interval length of $2\pi$ and set the damping parameter to the original value derived by Sivashinsky, i.e., $\nu = 4$. Then equation \eqref{eq:KS} can be written as

\begin{equation}
\begin{aligned}\label{eq:KS_normalized}
&u_t + 4u_{yyyy}+\mu\left[ u_{yy} + \cfrac 1 2 (u_y)^2 \right] = 0, \quad 0\leq y\leq 2\pi,\\
&u(y,0) = u_0(y), \quad u(y+2\pi,t) = u(y,t).
\end{aligned}
\end{equation}
In equation \eqref{eq:KS_normalized} we introduce a new parameter $\mu = L^2/4\pi^2$, where $L$ denotes the size of a typical pattern scale (cf. \eqref{eq:KS}). In \cite{HNZ86,KNS90} numerical and analytical studies were made by varying $\mu$ over a finite interval, showing the  complex hierarchy of bifurcations.
We are in particular interested in computing the unstable manifold of the trivial unstable steady state for different parameter values $\mu$. In order to use our algorithm developed in \cref{sec:continuation} it is crucial to have a good estimate of the dimension of the invariant set $\cA$ and $\cW_\Phi^u(y)$, respectively (cf. \cref{sec:review}). In \cite{RO94} it has been shown that the dimension of the inertial manifold of \eqref{eq:KS} for $\nu = 1$ is $d \leq L^{2.46}$, i.e., each invariant set has finite dimension. However, these estimates are very pessimistic and we expect that we will obtain one-to-one images of the unstable manifold for smaller related embedding dimensions $k$. 

In what follows, the observation space is defined through projections onto the first $k$ POD-coefficients. For each parameter value $\mu$ we compute the POD-basis by using the snapshot-matrix obtained through a long-time integration with the initial condition
\[
u_0(y) = 10^{-4}\cdot \cos\left(y\right)\cdot \left(1+\sin\left(y\right)\right)
\]
(cf. \cref{ssec:num_real_R}).
\begin{figure}[t!]
	\parbox[b]{0.49\textwidth}{\centering \includegraphics[width=0.49\textwidth]{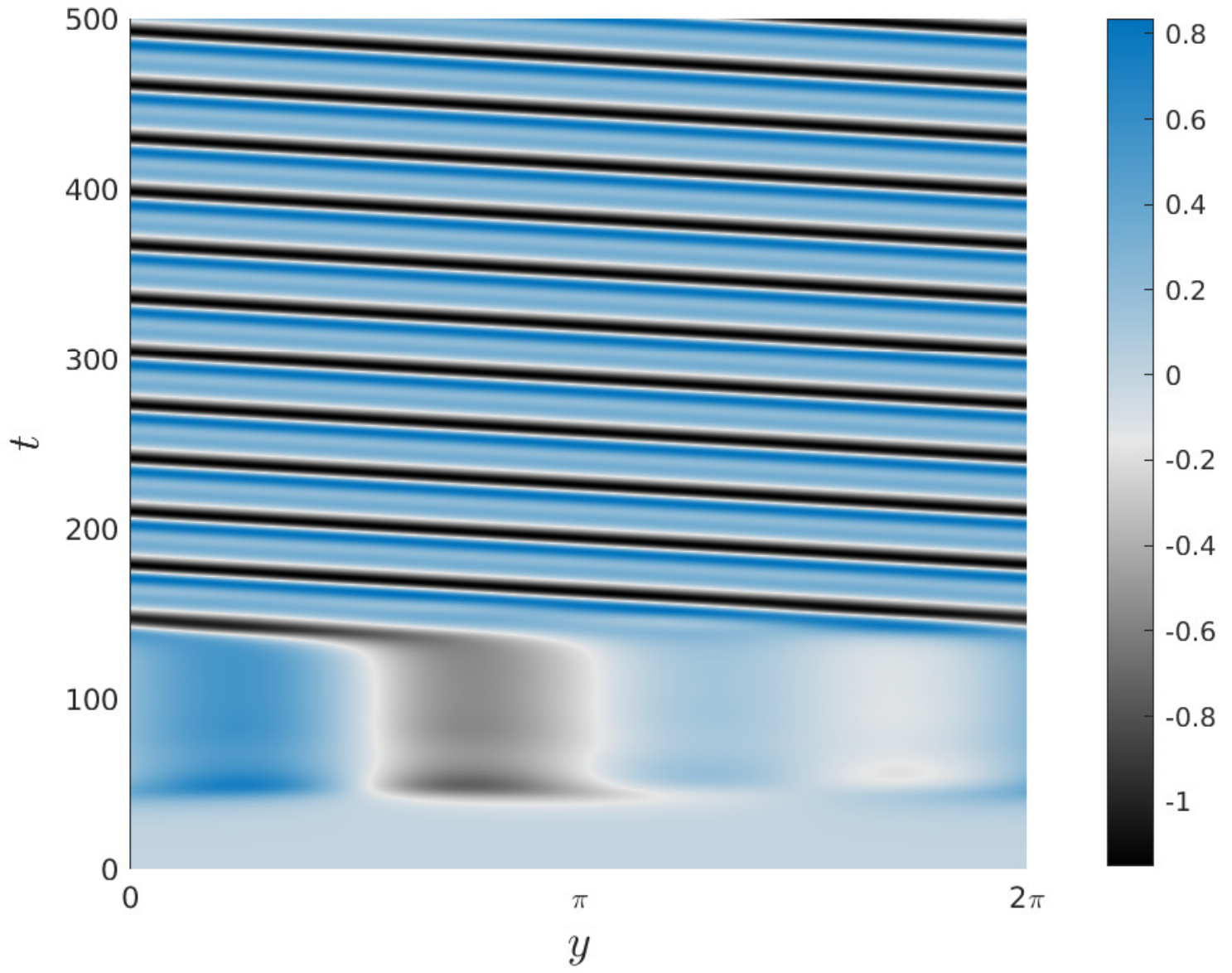}\\\scriptsize (a) Direct simulation}
	\parbox[b]{0.49\textwidth}{\centering \includegraphics[width=0.49\textwidth]{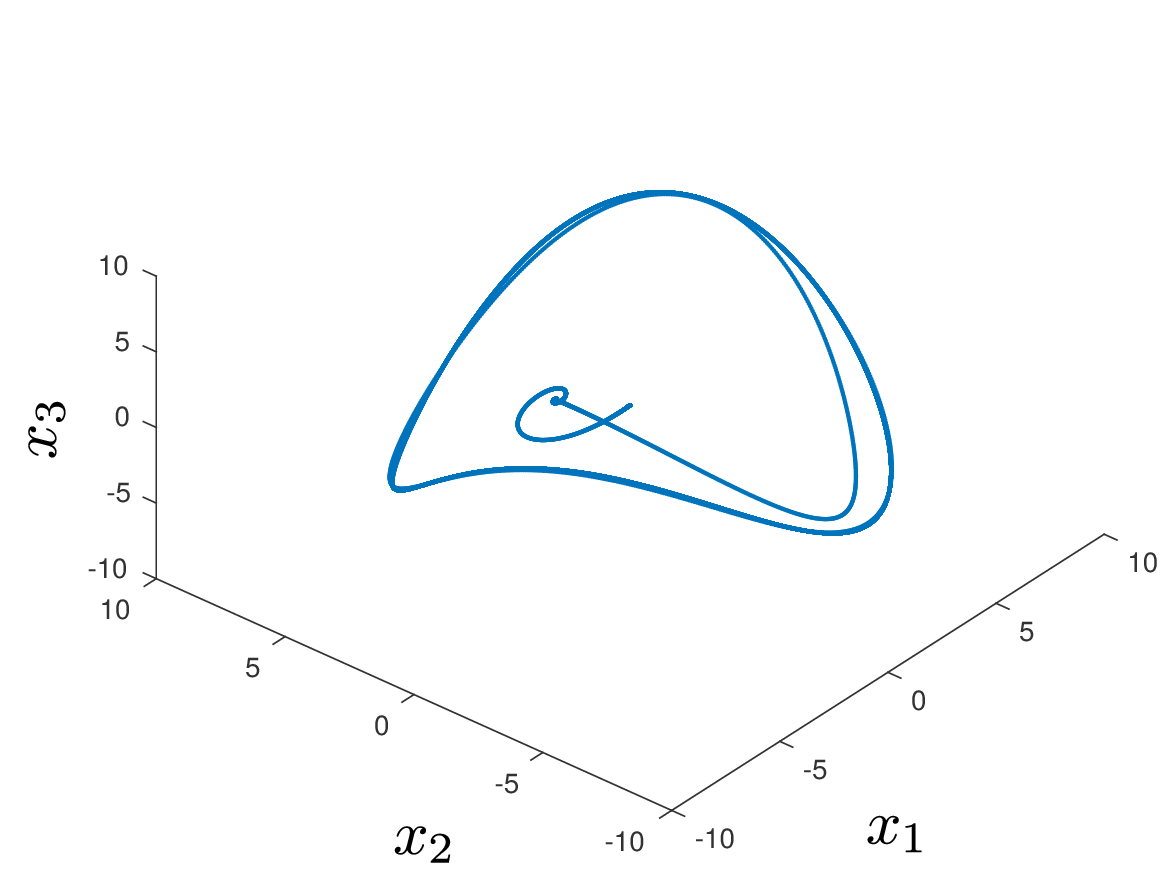}\\\scriptsize (b) Observation space}
	\caption{(a) Direct simulation of the Kuramoto-Sivashinsky equation for $\mu = 15$. The initial value is attracted to a traveling wave solution;\ (b) Corresponding embedding in observation space. As expected the CDS possesses a related limit cycle.}
	\label{fig:KS15_simulation}
\end{figure}

\subsubsection{The traveling wave}\quad\\
For the parameter value $\mu = 15$ the Kuramoto-Sivashinsky equation possesses a stable traveling wave solution (cf.~\cref{fig:KS15_simulation}~(a)). Due to the symmetry imposed by the periodic boundary conditions there are two waves traveling in opposite directions \cite{KNS90}. Correspondingly the CDS possesses for each traveling wave a limit cycle in observation space (cf. \cref{fig:KS15_simulation}~(b)). We expect that the dimension of the embedded unstable manifold is approximately two since different initial conditions result in trajectories in observation space that are rotations of each other about the origin.

By choosing the embedding dimension $k = 7$ we restrict the initial functions in the first continuation step to the subspace that is spanned by the first seven POD-modes and since $d(\overline{W^u(p)};\R^k) \approx 2$, we expect to obtain a one-to-one image of $\cW_{\Phi}^u(u^*)$. In \cref{fig:newE} we show a comparison of the numerical realization of the continuous map $E$ with initial functions generated by \eqref{eq:numE}, i.e., where $x_{k+1} = \ldots = x_S = 0$ for all test points $x$ (red) and the statistical approach discussed in \cref{ssec:num_real_E} (blue). By using only $k=3$ POD-coefficients, we construct initial functions that by far do not satisfy $(E\circ R)(u) = u$. 
\begin{figure}[!htb]
	\centering
	\includegraphics[width=.6\textwidth]{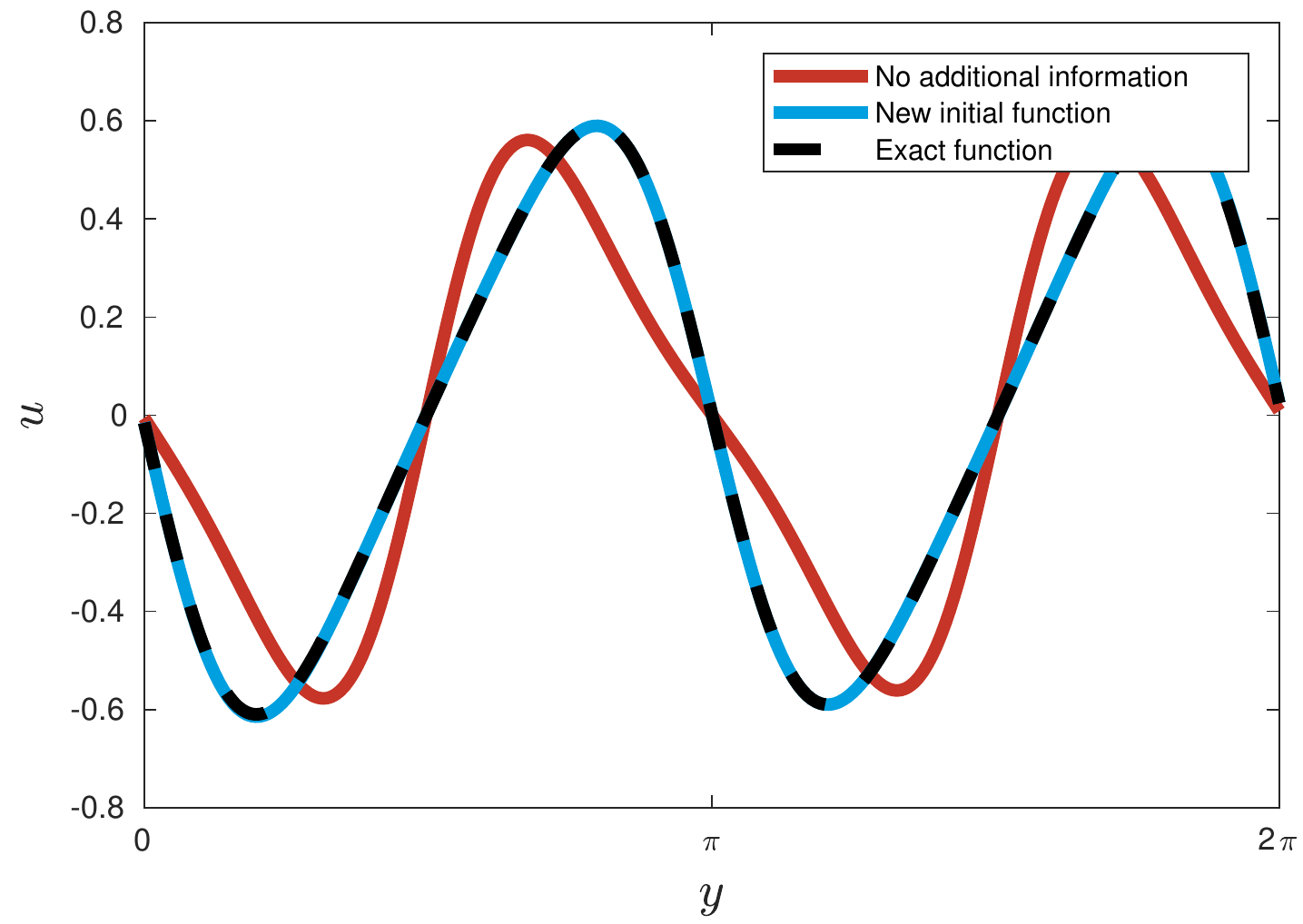}
	\caption{Illustration of the numerical realization of $E$ for the Kuramoto-Sivashinsky equation for $\mu = 15,\ k=3$ and $S = 13$ in one specific box $B$ after at least one continuation step: The black function represents $\Phi(E(\hat x))$ from the previous continuation step; the red function is generated by \eqref{eq:numE}; the blue function is generated by \eqref{eq:E_extended}, where we use additional statistical information for the POD-coefficients $x_{k+1},\ldots, x_S$.}
	\label{fig:newE}
\end{figure}

We choose $Q = [-8,8]^7$ and initialize a fine partition $\cP_s$ of $Q$ for $s = 21, 35, 49, 63$. 
Next we set $T = 200$. In addition, we define a finite time grid $\{ t_0,\ldots, t_N \}$, where $t_N = T$, and mark all boxes that are hit in each time step (a similar approach has been used in \cite{Junge99}). This strategy will be used for each example in this section. 

In \cref{fig:KS_15}~(a)-(d) we illustrate successively finer box coverings of the unstable manifold as well as a transparent box covering depicting the complex internal structure of the unstable manifold. Observe that the boundary of the unstable manifold consists of two limit cycles which are symmetric in the first POD-coefficient $x_1$. This is due to the fact that the Kuramoto-Sivashinsky equation with periodic boundary conditions  \eqref{eq:KS_normalized} possesses $O(2)$-symmetry.

\begin{figure}[!h]
	\begin{minipage}{0.48\textwidth}
		\includegraphics[width = \textwidth]{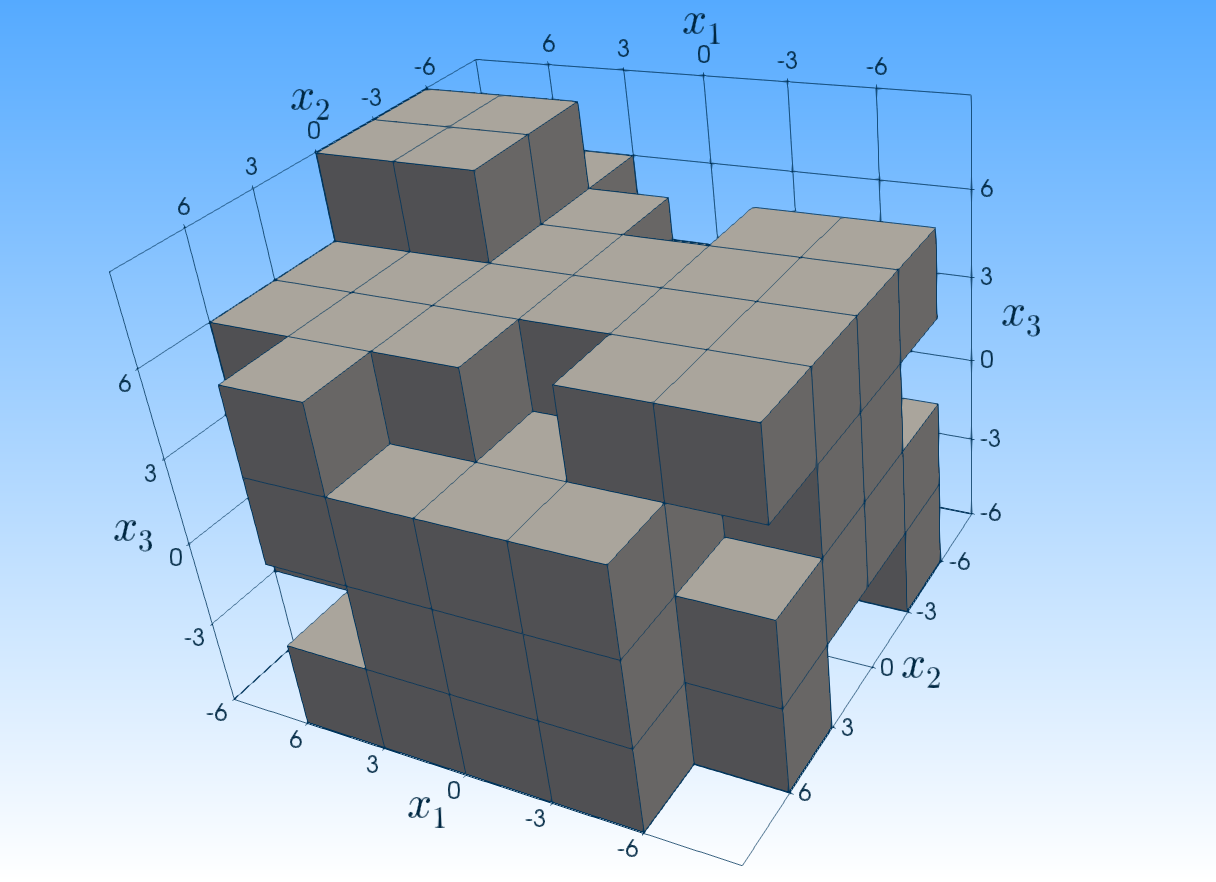}\\
		\centering \scriptsize{(a) $s = 21$ and $\ell = 0$}
	\end{minipage}
	\hfill
	\begin{minipage}{0.48\textwidth}
		\includegraphics[width = \textwidth]{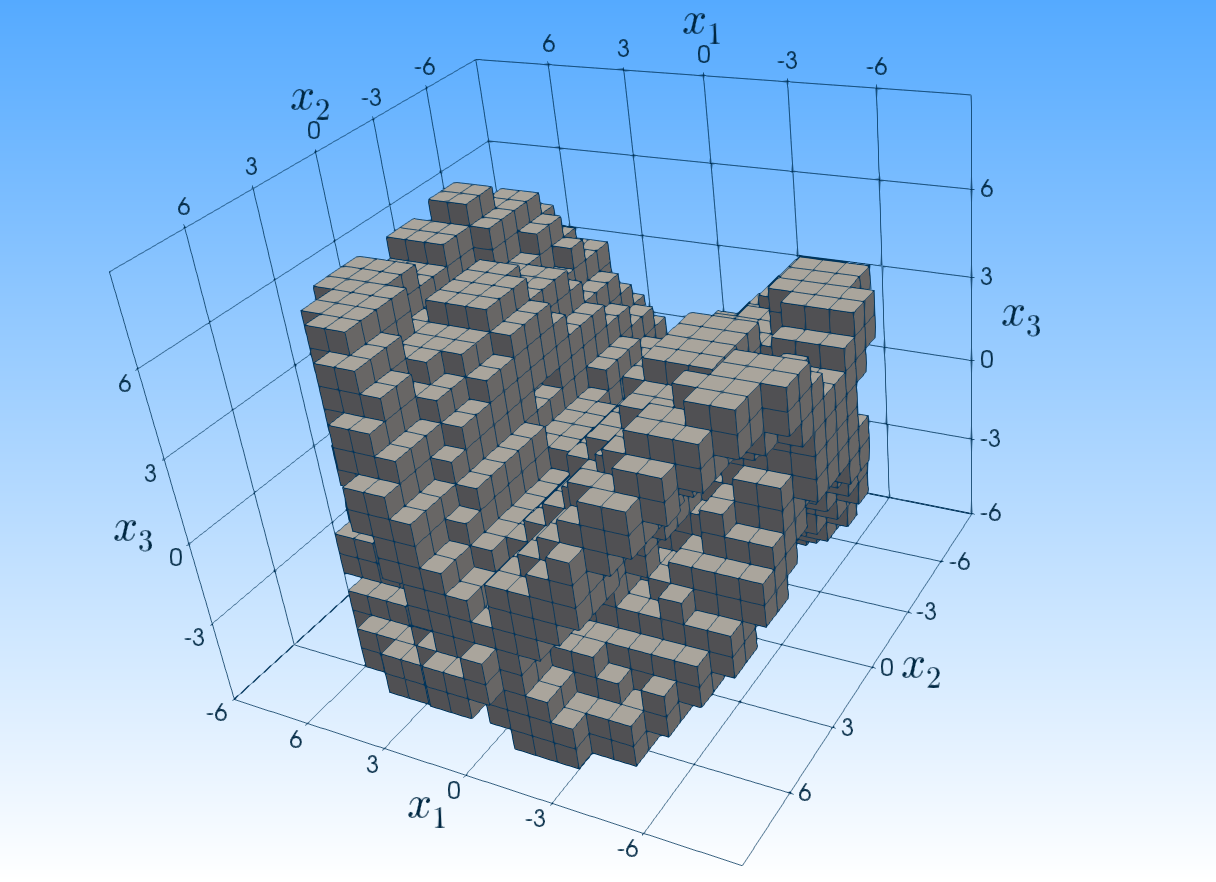}\\
		\centering \scriptsize{(b) $s = 35$ and $\ell = 0$}
	\end{minipage}\\[1em]
	\begin{minipage}{0.48\textwidth}
		\includegraphics[width = \textwidth]{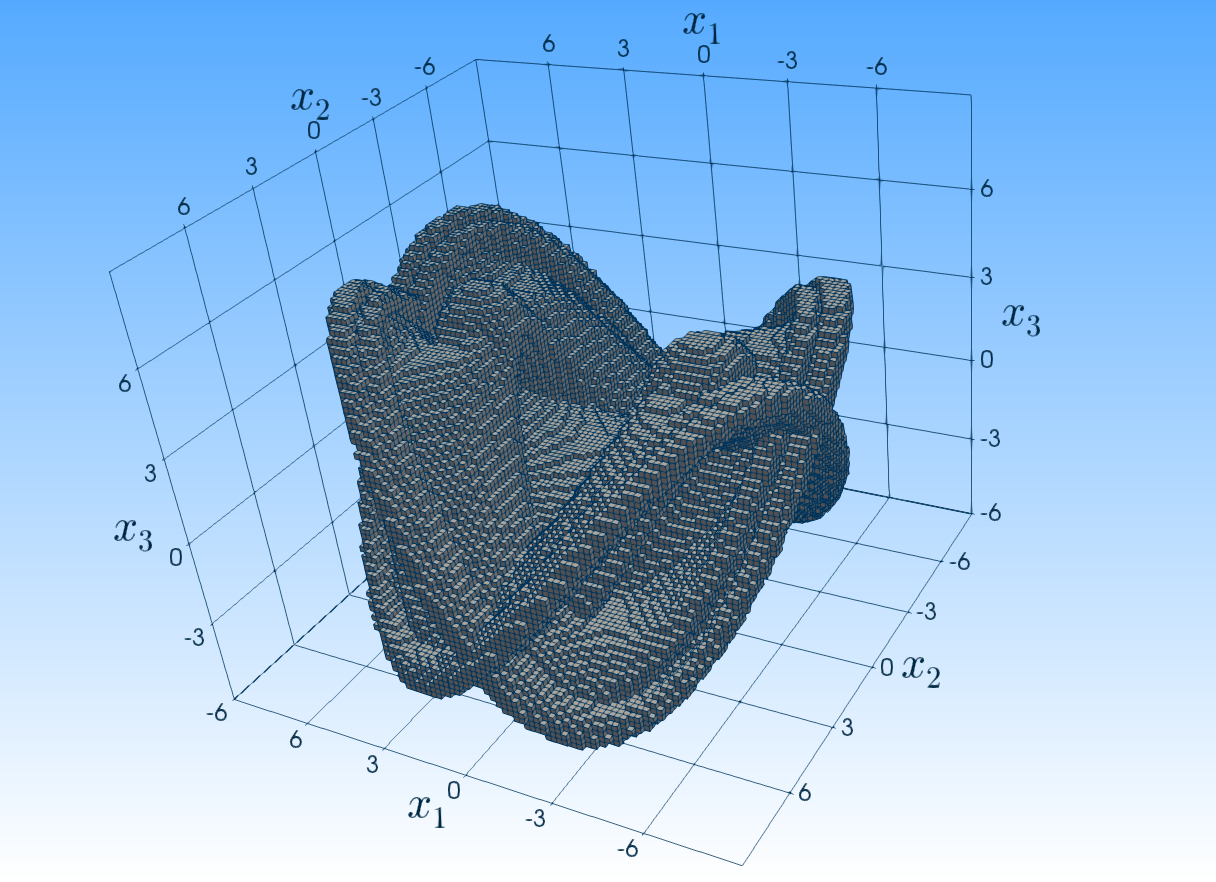}\\
		\centering \scriptsize{(c) $s = 49$ and $\ell = 0$}
	\end{minipage}
	\hfill
	\begin{minipage}{0.48\textwidth}
		\includegraphics[width = \textwidth]{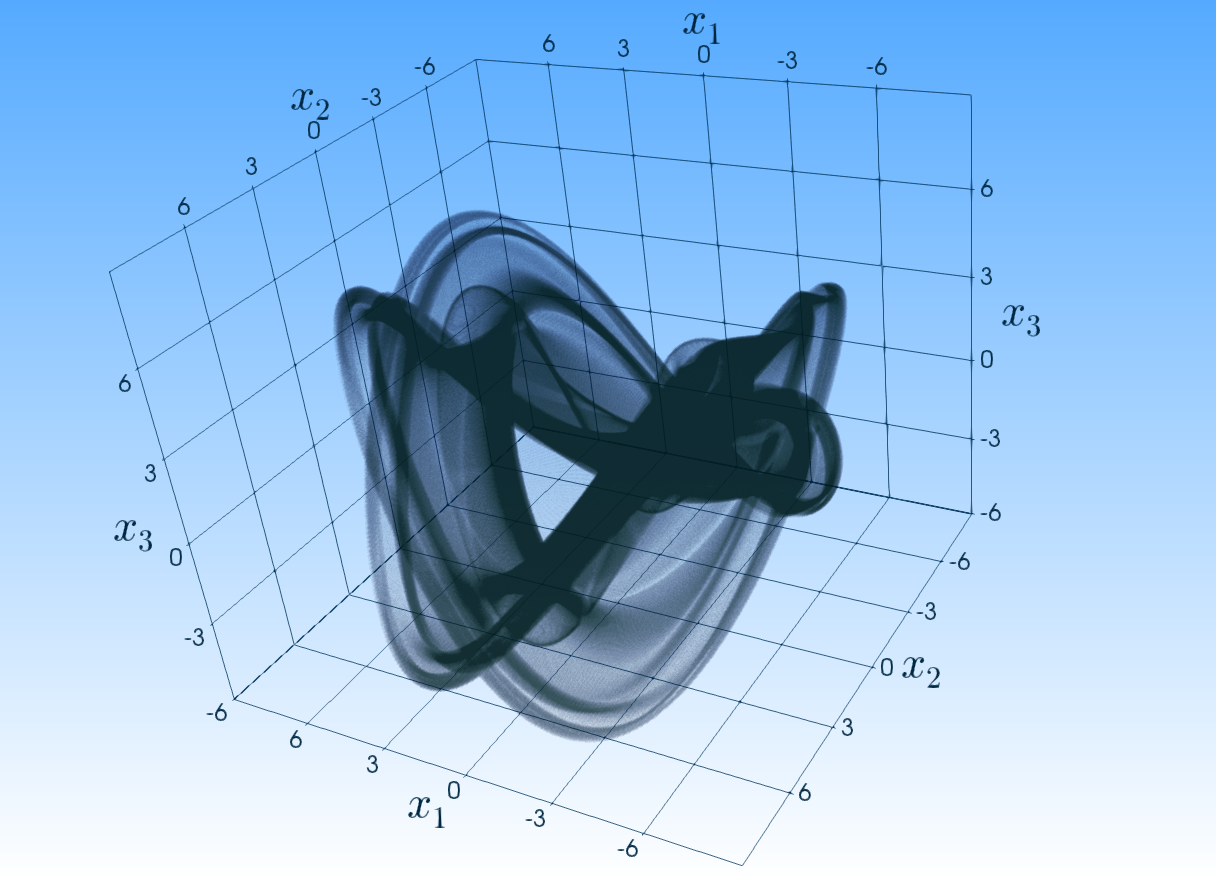}\\
		\centering \scriptsize{(d) $s = 63$ and $\ell = 0$}
	\end{minipage}
	\caption{(a)-(d) Successively finer box-coverings of the unstable manifold for $\mu = 15$. (d)~Transparent box covering for $s = 63$ and $\ell = 0$ depicting the internal structure of the unstable manifold.}
	\label{fig:KS_15}
\end{figure}

\subsubsection{The stable heteroclinic cycle}\quad\\
For $\mu = 18$, the observed long-term behavior consists of a pulsation between two states, which appear to be $\pi/2$-translations of each other. The transients linger close to one of these states for a comparatively long time before they pulse back to the other (cf. \cref{fig:KS18_simulation}~(a)). 

\begin{figure}[t!]
	\parbox[b]{0.49\textwidth}{\centering \includegraphics[width=0.49\textwidth]{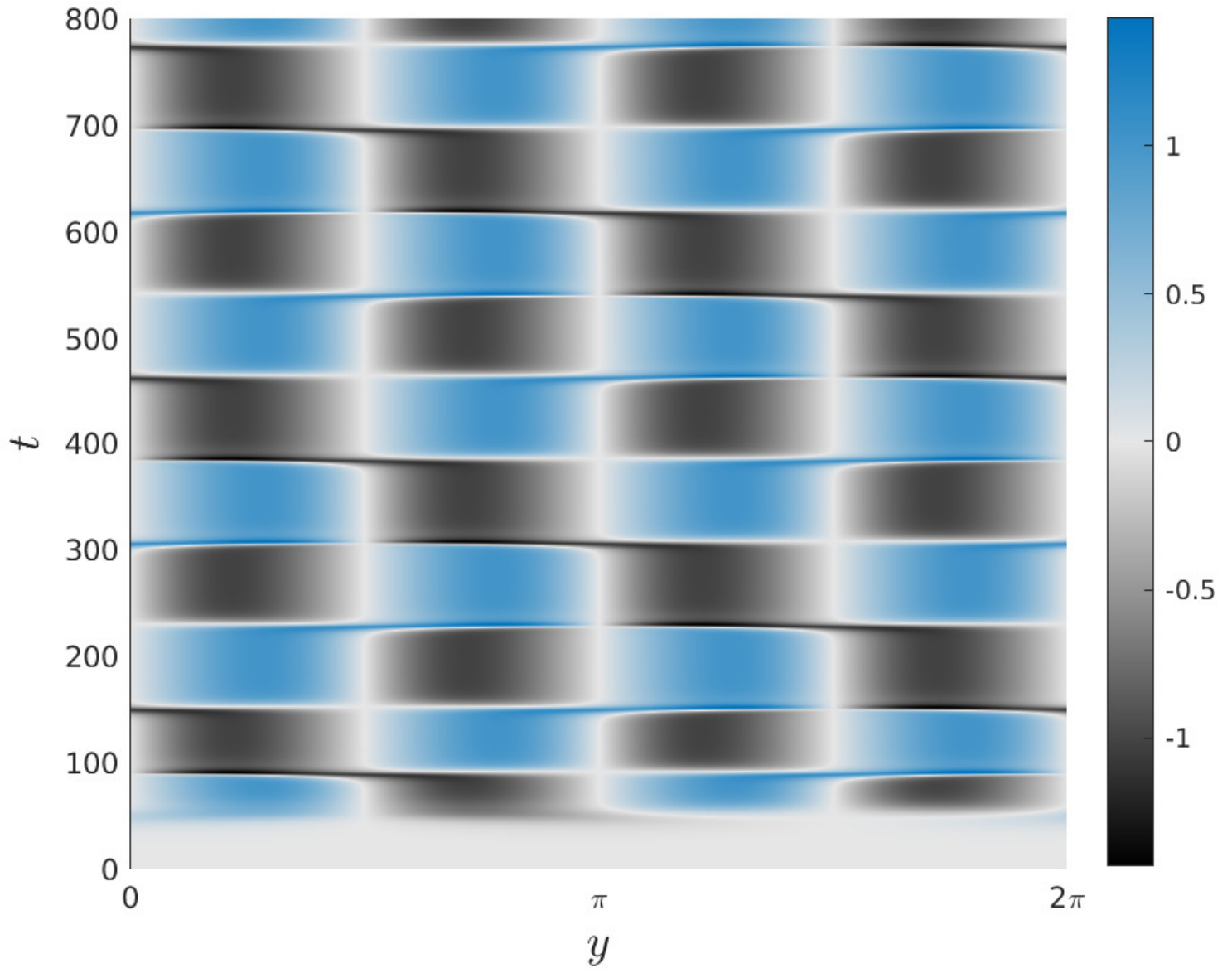}\\\scriptsize (a) Direct simulation}
	\parbox[b]{0.49\textwidth}{\centering \includegraphics[width=0.49\textwidth]{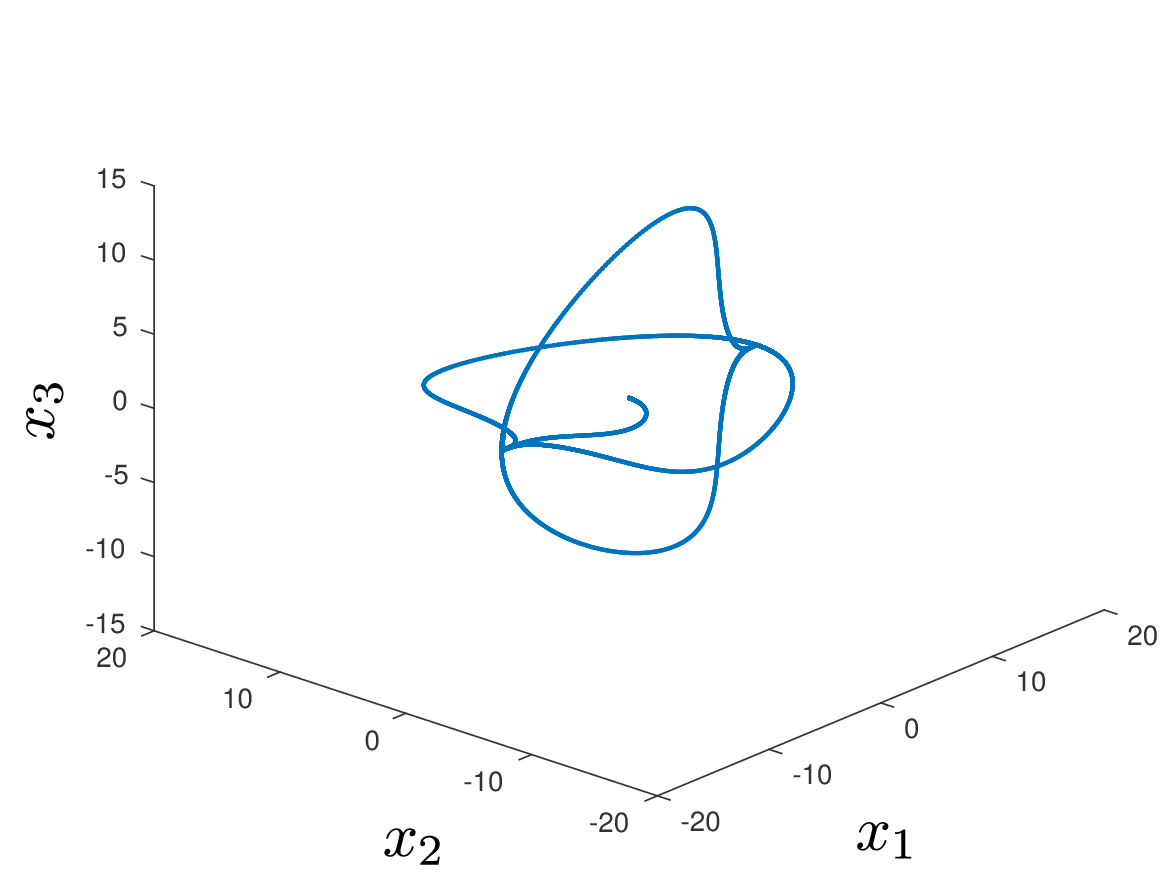}\\\scriptsize (b) Observation space}
	\caption{(a) Direct simulation of the Kuramoto-Sivashinsky equation for $\mu = 18$;\ (b) Corresponding embedding in observation space depicting the stable heteroclinic loop.}
	\label{fig:KS18_simulation}
\end{figure}

\begin{figure}[t!]
	\begin{minipage}{0.49\textwidth}
		\includegraphics[width = \textwidth]{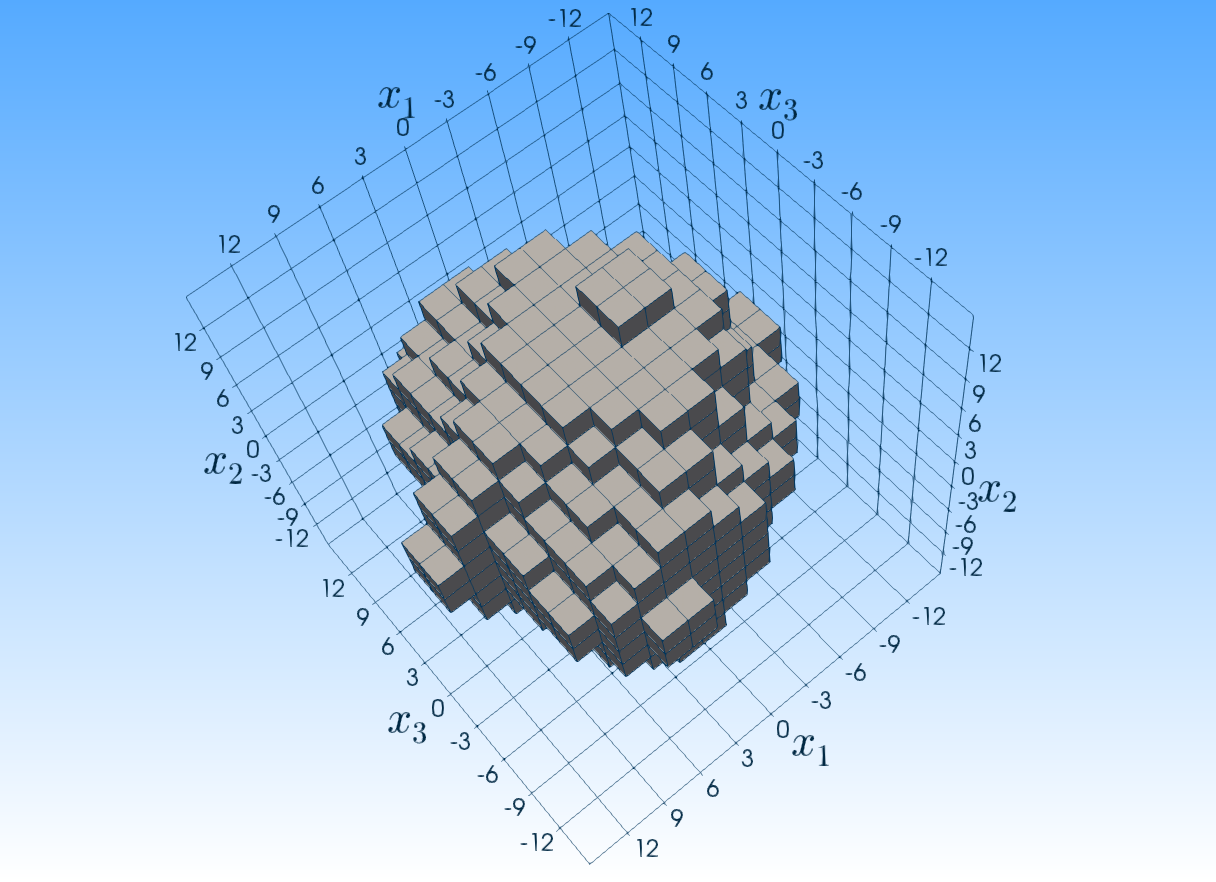}\\
		\centering \scriptsize{$s = 12$ and $\ell = 0$}
	\end{minipage}
	\hfill
	\begin{minipage}{0.49\textwidth}
		\includegraphics[width = \textwidth]{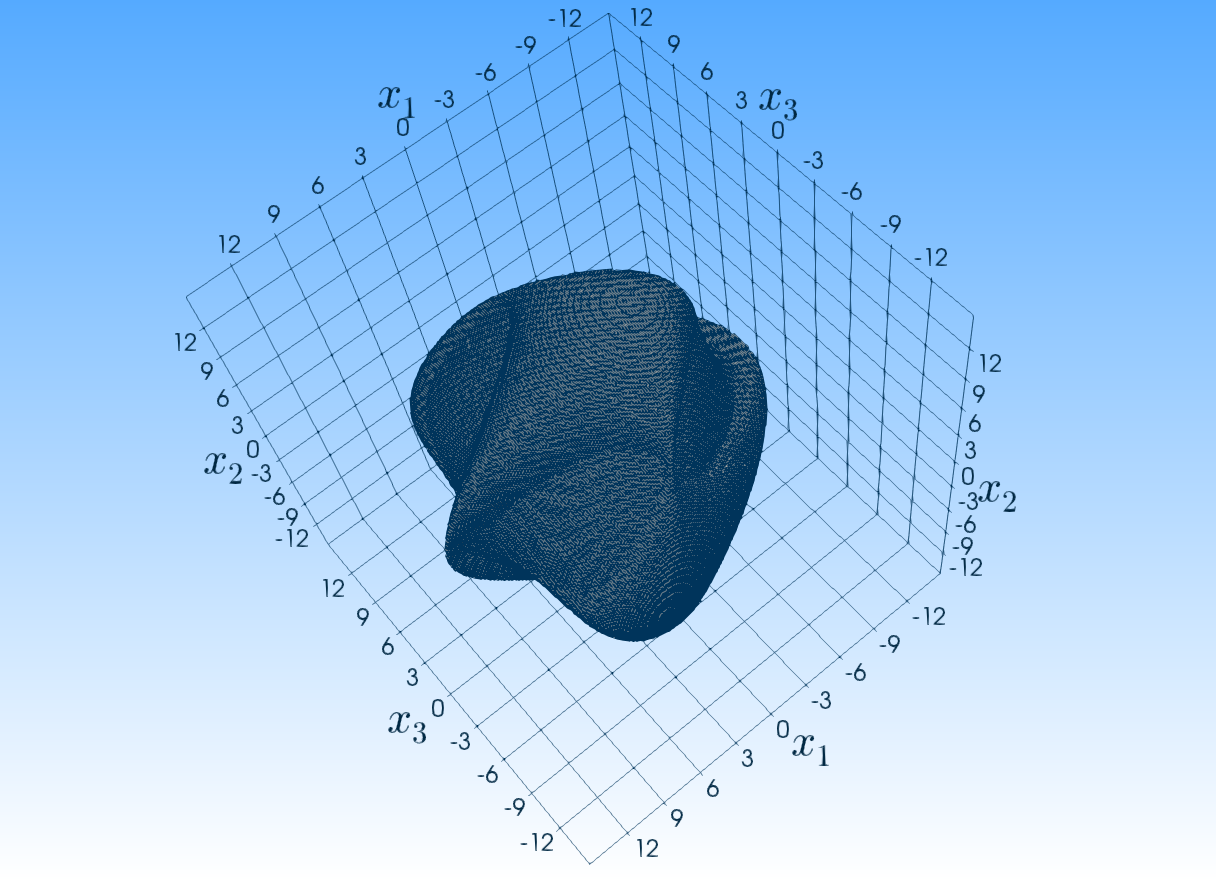}\\
		\centering \scriptsize{$s = 24$ and $\ell = 0$}
	\end{minipage}
	\caption{(a)-(b) Successively finer box-coverings of the unstable manifold for $\mu = 18$.}
	\label{fig:KS_18}
\end{figure}

It was observed in \cite{KNS90} that the pulsation projected onto the $\cos(2x)$ and $\sin(2x)$ coefficient plane, respectively, appears as a straight line passing through the origin. In addition, different pulsations, resulting from different initial conditions, give straight lines that are rotations of each other about the origin. By projecting the pulsation onto the first three POD-coefficients, we observe a similar behavior in observation space. Thus, we expect that the unstable manifold will be of dimension at least three. The projection of the long time simulation (cf.~\cref{fig:KS_18}~(a)) onto the first three POD-coefficients is shown in \cref{fig:KS18_simulation}~(b).

For the initialization of \cref{alg:continuation}, we choose the embedding dimension $k = 3$ and, therefore, restrict our initial functions to the function space generated by the first three POD-modes. Since the embedding dimension is too small, we expect to approximate just a projection of the unstable manifold. For a related discussion in the finite dimensional context we refer the interested reader to \cite{SYC91}. Moreover, we choose $Q = [-20,20]^3$ and set $T = 200$. In \cref{fig:KS_18}~(a) and (b) we show two box coverings obtained by our continuation method for different values $s \in \N$ of the partition $\cP_s$ of $Q$. As expected, we observe that the embedded unstable manifold appears to be a solid three-dimensional object. This corresponds to the observation mentioned above.

\begin{figure}[!htb]
	\parbox[b]{0.49\textwidth}{\centering \includegraphics[width=0.49\textwidth]{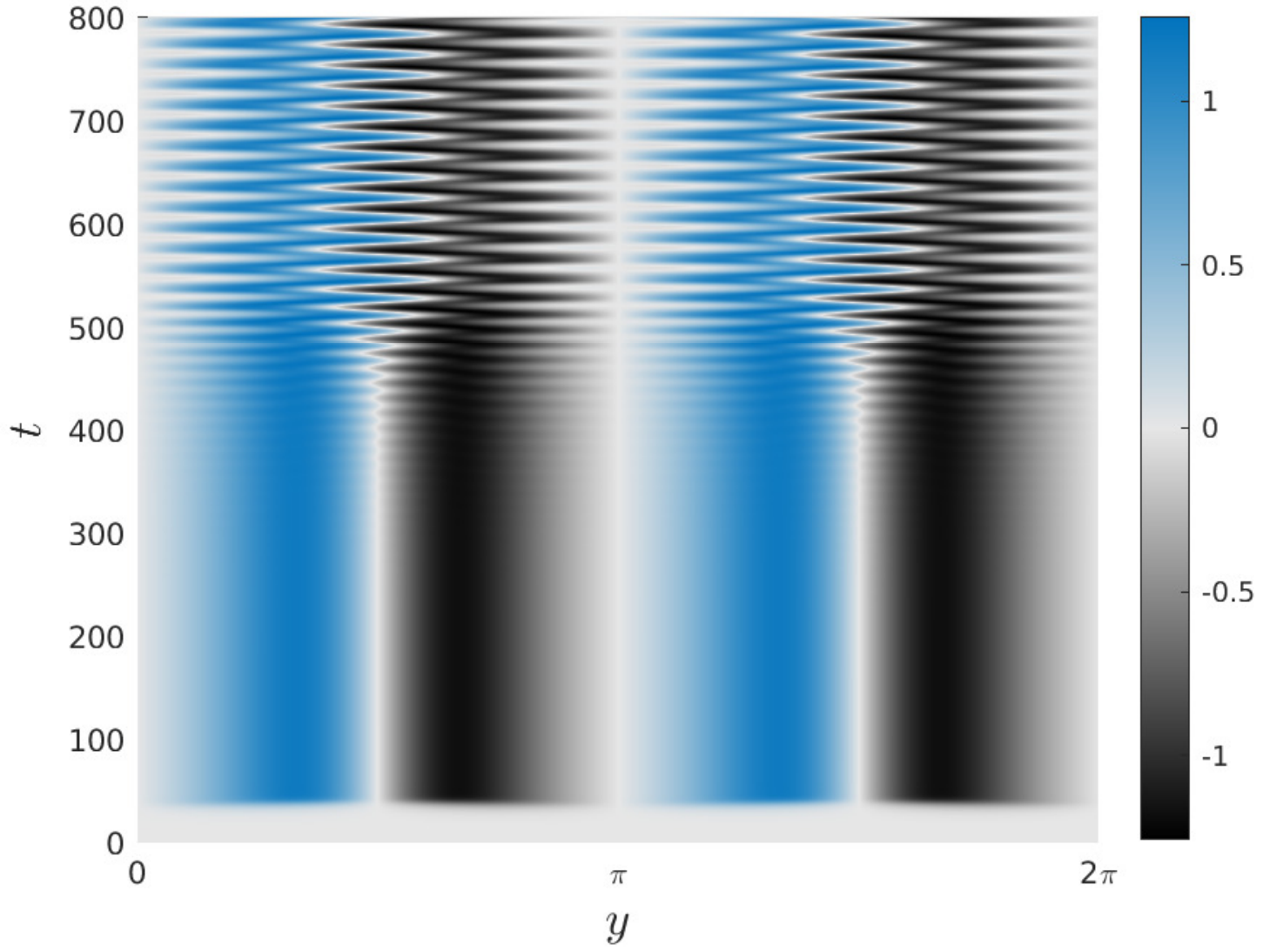}\\\scriptsize (a) Direct simulation}
	\parbox[b]{0.49\textwidth}{\centering \includegraphics[width=0.49\textwidth]{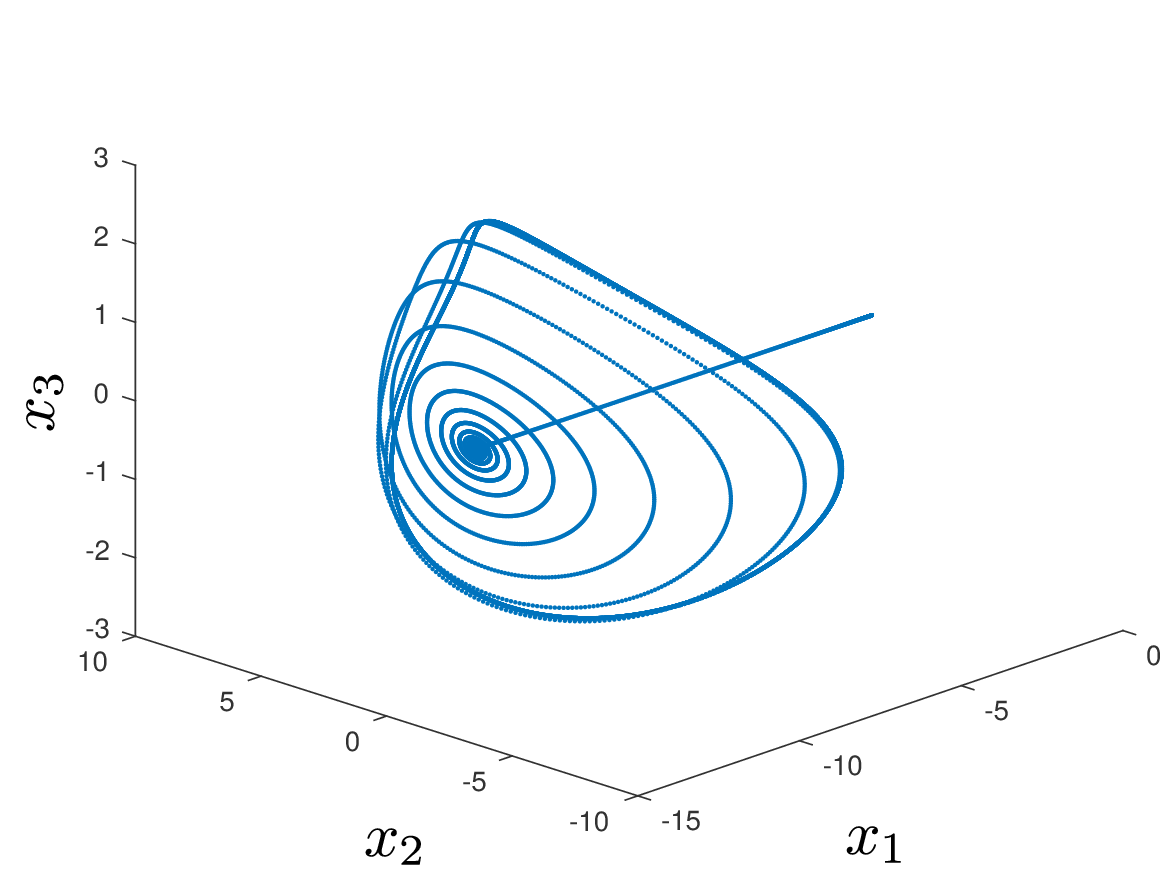}\\\scriptsize (b) Observation space}
	\caption{(a) Direct simulation of the Kuramoto-Sivashinsky equation for $\mu = 32$;\ (b) Corresponding embedding in observation space.}
	\label{fig:KS32_simulation}
\end{figure}
\subsubsection{The Oseberg transition}\label{sssec:oseberg}\quad\\
In the last example we have chosen $\mu=32$. In \cref{fig:KS32_simulation}~(a) and (b) we show a direct simulation 
as well as the corresponding embedding in the observation space. The initial condition $u_0$ is first attracted to an unstable so-called bimodal steady state, and eventually accumulates on a limit cycle as $t \rightarrow \infty$.

In \cref{fig:KS_32}~(a) and (b) we show successively finer box coverings of the unstable manifold obtained by our continuation method. We remark that already previously a similar result has been obtained by \cite{JJK01} which the authors called \emph{the Oseberg transition}. In this work, the authors restricted the phase space to the invariant subspace of odd functions, in which the solutions can be represented by the Fourier series
\[
u(y,t) = \sum_{j=1}^{\infty} b_j(t)\sin(jy),
\]
where an eight-mode Galerkin truncation of the PDE was used. The restricted global attractor, illustrated through the first, second and third Fourier coefficients as observables, look qualitatively very similar to our unstable manifold illustrated in \cref{fig:KS_32}.

\begin{figure}[!htb]
	\begin{minipage}{0.49\textwidth}
		\includegraphics[width = \textwidth]{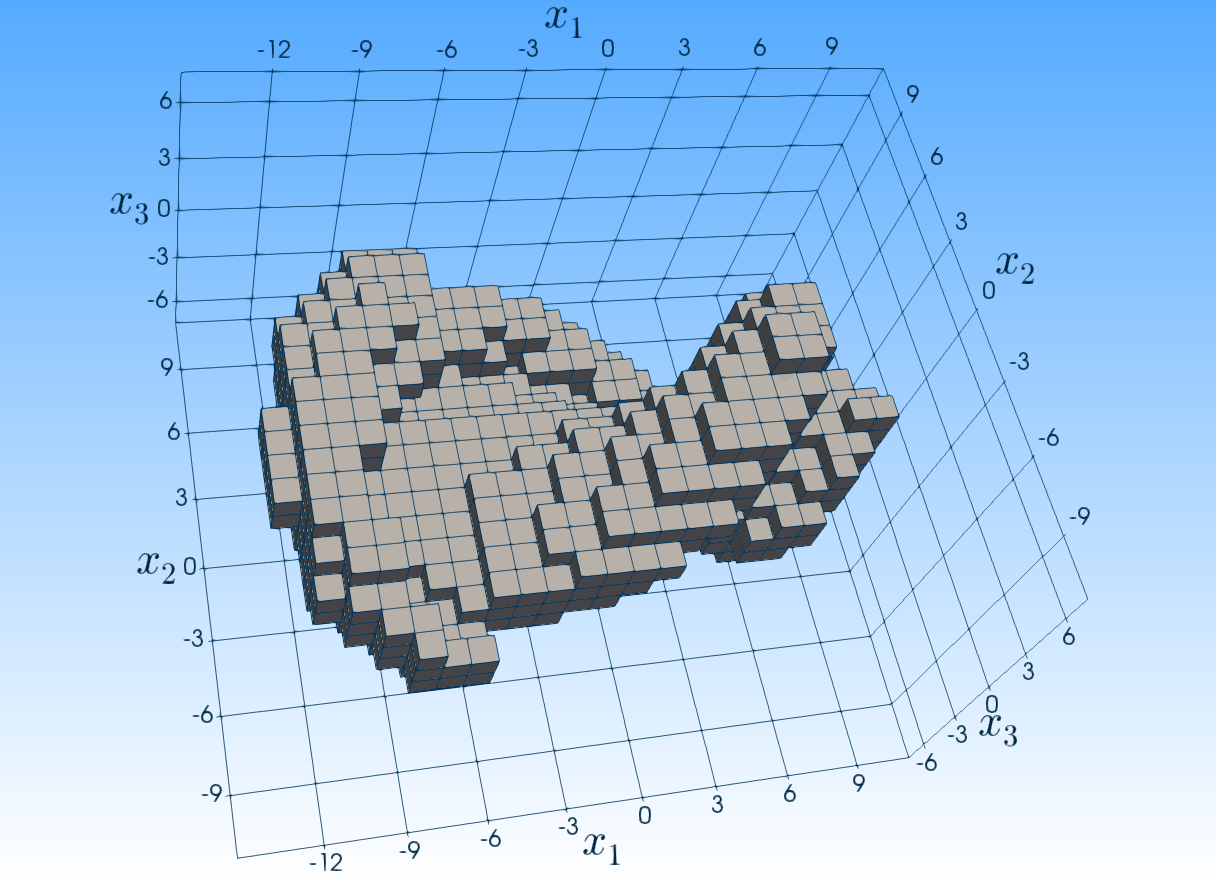}\\
		\centering \scriptsize{(a) $s = 15$ and $\ell = 0$}
	\end{minipage}
	\hfill
	\begin{minipage}{0.49\textwidth}
		\includegraphics[width = \textwidth]{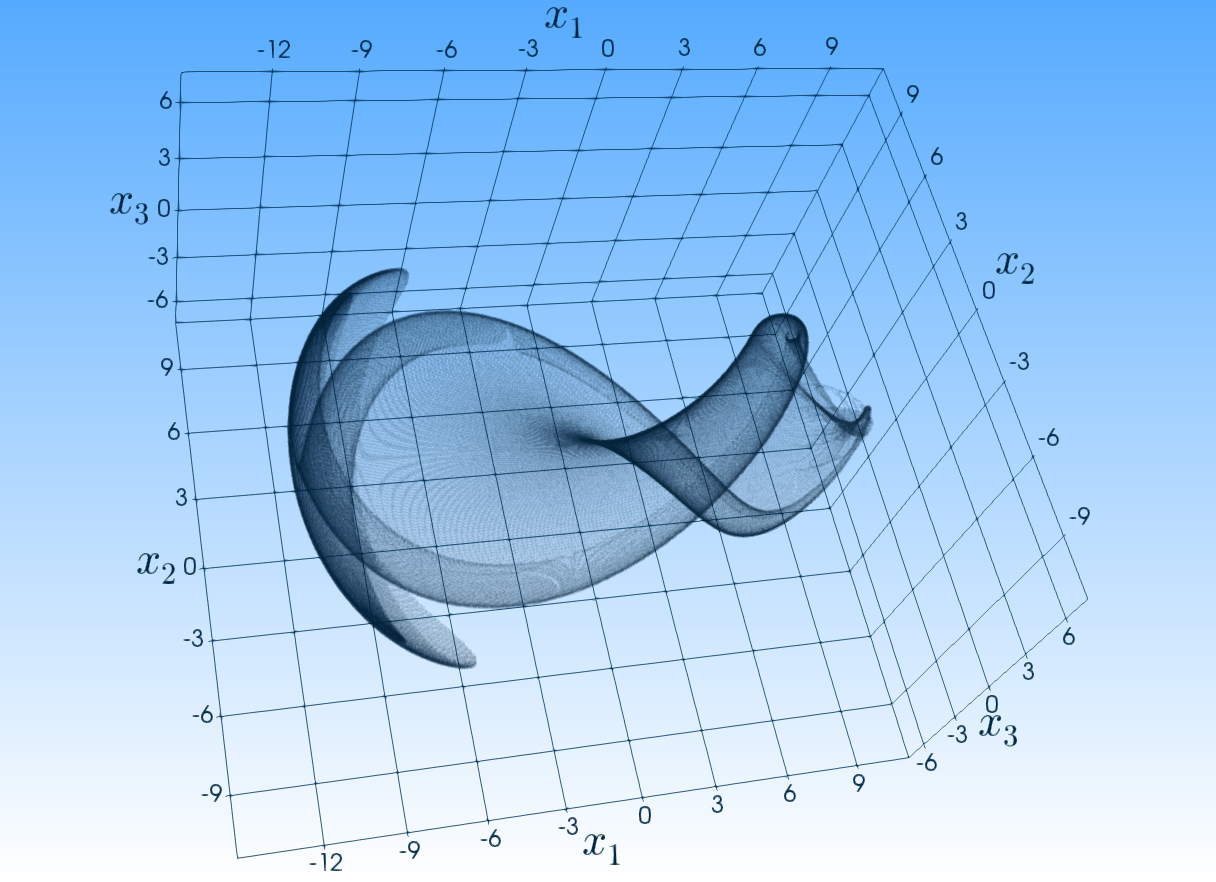}\\
		\centering \scriptsize{(b) $s = 27$ and $\ell = 0$}
	\end{minipage}
	\caption{Successively finer box-coverings of the unstable manifold for $\mu=32$. Furthermore, in (b) we show a transparent box covering for $s = 27$ and $\ell = 0$ depicting the internal structure of the unstable manifold.}
	\label{fig:KS_32}
\end{figure}

\begin{figure}[!htb]
	\begin{minipage}{0.49\textwidth}
		\includegraphics[width = \textwidth]{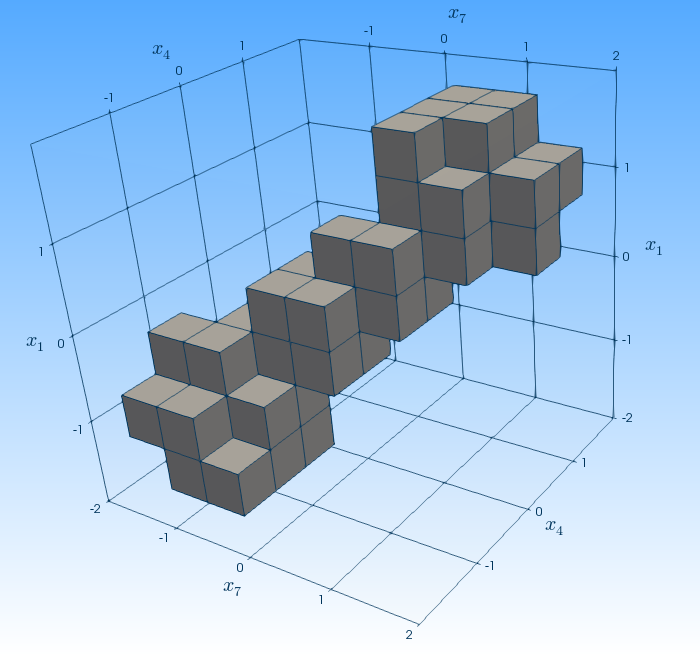}\\
		\centering \scriptsize{(a) $s = 21$ and $\ell = 0$}
	\end{minipage}
	\hfill
	\begin{minipage}{0.49\textwidth}
		\includegraphics[width = \textwidth]{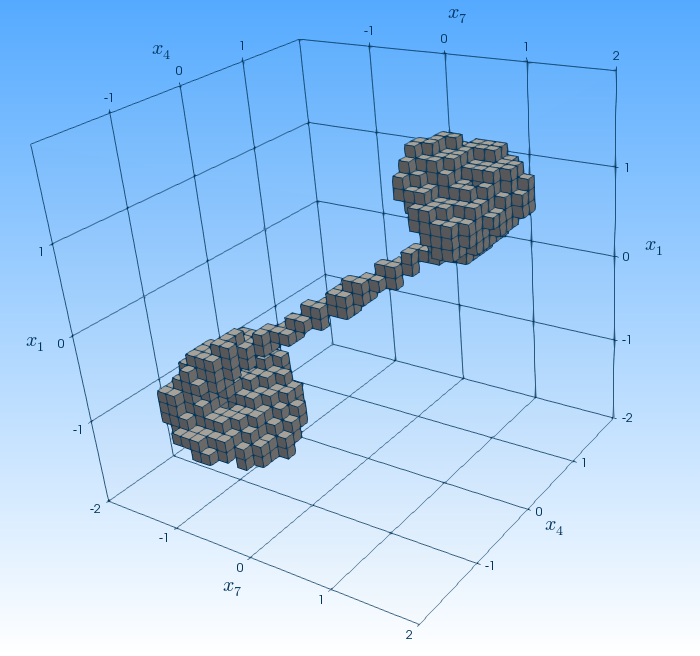}\\
		\centering \scriptsize{(b) $s = 35$ and $\ell = 0$}
	\end{minipage}\\[1em]
	\begin{minipage}{0.49\textwidth}
		\includegraphics[width = \textwidth]{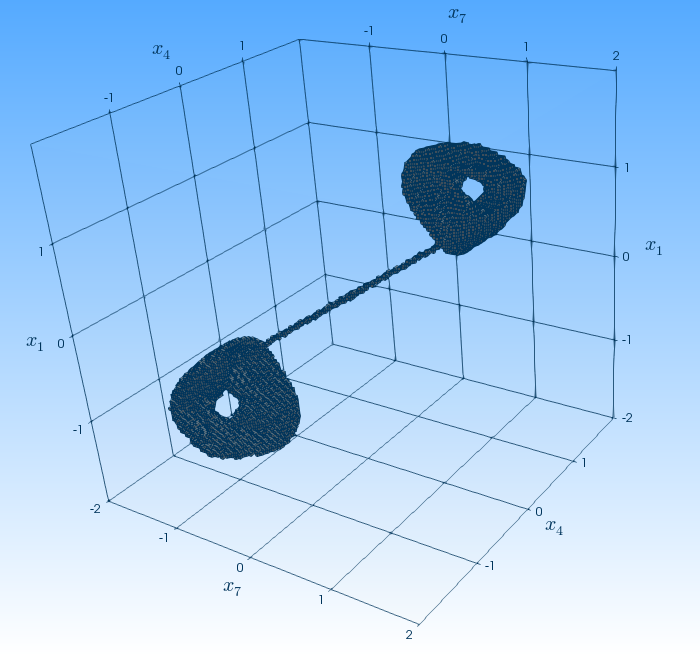}\\
		\centering \scriptsize{(c) $s = 49$ and $\ell = 0$}
	\end{minipage}
	\hfill
	\begin{minipage}{0.49\textwidth}
		\includegraphics[width = \textwidth]{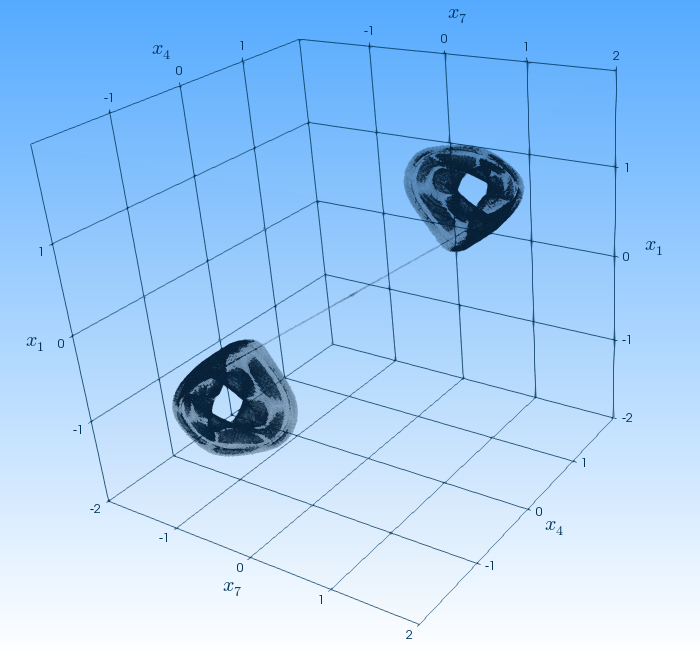}\\
		\centering \scriptsize{(d) $s = 49$ and $\ell = 14$}
	\end{minipage}
	\caption{(a)-(c) Successively finer box-coverings of the unstable manifold of the Mackey-Glass equation \eqref{eq:mg}. (d)~Transparent box covering for $s = 49$ and $\ell = 14$.}
	\label{fig:mg_continuation}
\end{figure}

\subsection{The Mackey-Glass equation}
\label{ssec:MG}
Finally, we show the feasibility of our method by computing an unstable manifold for one delay differential equation with constant delay. Here, we consider the delay differential equation introduced by Mackey and Glass in 1977 \cite{mackey1977} defined by
\begin{equation}\label{eq:mg}
\dot u(t) = \beta \frac{u(t-\tau)}{1+u(t-\tau)^\eta}-\gamma u(t),
\end{equation}
where we choose $\beta = 2, \gamma = 1$, $\eta = 9.65$, and $\tau = 2$. This equation is a model of blood production, where $u(t)$ represents the concentration of blood at time $t$, $\dot u(t)$ represents production at time $t$ and $u(t-\tau)$ is the concentration at an earlier time, when the request for more blood is made. This equation possesses an unstable steady state $u_0(t) = 0$. To this end, we start the continuation method in ${p = R(u_0) \in \R^k}$, where $R$ is defined by \eqref{eq:R_delay}. Furthermore, we choose $k = 7$ and $Q = [-1.5,1.5]^7 \subset \R^7$.

In \cref{fig:mg_continuation}~(a) -- (c), we show projections of the coverings obtained via the continuation method for $s = 21, 35, 49$ ($\ell = 0$ in all three cases). In addition, in \cref{fig:mg_continuation}~(d) we also show a transparent box covering for $s = 49$ and $\ell = 14$.

\section*{Acknowledgments} This work is supported by the Priority Programme SPP 1881 Turbulent Superstructures of the Deutsche Forschungsgemeinschaft.

\bibliographystyle{plain}
\bibliography{references}

\end{document}